\theoremstyle{plain}
\newtheorem{thm}{\protect\theoremname}
  \theoremstyle{remark}
  \newtheorem{rem}[thm]{\protect\remarkname}
  \theoremstyle{plain}
  \newtheorem{lem}[thm]{\protect\lemmaname}
  \theoremstyle{plain}
  \newtheorem{prop}[thm]{\protect\propositionname}
  \providecommand{\lemmaname}{Lemma}
  \providecommand{\propositionname}{Proposition}
  \providecommand{\remarkname}{Remark}
\providecommand{\theoremname}{Theorem}
\begin{document}

\title[Blowing up solutions for supercritical Yamabe boundary problems]{Blowing up solutions for supercritical Yamabe problems on manifolds
with umbilic boundary}

\thanks{The first authors was supported by Gruppo Nazionale per l'Analisi Matematica, la Probabilit\`{a} e le loro Applicazioni (GNAMPA) of Istituto Nazionale di Alta Matematica (INdAM) and by project PRA from Univeristy of Pisa}

\author{Marco G. Ghimenti}
\address{M. Ghimenti, \newline Dipartimento di Matematica Universit\`a di Pisa
Largo B. Pontecorvo 5, 56126 Pisa, Italy}
\email{marco.ghimenti@unipi.it}

\author{Anna Maria Micheletti}
\address{A. M. Micheletti, \newline Dipartimento di Matematica Universit\`a di Pisa
Largo B. Pontecorvo 5, 56126 Pisa, Italy}
\email{a.micheletti@dma.unipi.it.}

\begin{abstract}
We build blowing-up solutions for a supercritical perturbation of
the Yamabe problem on manifolds with umbilic boundary provided the
dimension of the manifold is $n\ge8$ and that the Weyl tensor $W_{g}$
is not vanishing on $\partial M$.
\end{abstract}

\keywords{Umbilic boundary, Yamabe problem, Compactness, Stability}

\subjclass[2000]{35J65, 53C21}
\maketitle

\section{Introduction}

Let $(M,g)$ be a smooth compact Riemannian manifold of dimension
$n\ge3$ with a smooth boundary $\partial M$. A well known problem
in differential geometry is whether $(M,g)$ can be conformally deformed
in a constant scalar manifold with boundary of constant mean curvature.
When the boundary is empty this is called the Yamabe problem (see
\cite{Au,yam}) which has been completely solved by Aubin \cite{Au},
Schoen \cite{sch}, and Trundinger \cite{tru}. Escobar \cite{E92}
studied the problem in the context of manifolds with boundary and
gave an affirmative solution to the question in some cases. The remaining
cases were studied by Marques \cite{M1}, Almaraz \cite{A1}, Brendle
and Chen \cite{BC}, Mayer and Ndiaye \cite{MN}.

Once it is known that the problem admits solution, a natural question
about the compactness of the full set of solutions arises. Concerning
the Yamabe problem, a necessary condition is that the manifold is
not conformally equivalent to the standard sphere $\mathbb{S}^{n}$,
since the set of conformal transformation of the round sphere is not
compact. The problem of compactness has been studied by Schoen in
1988 \cite{s3} and by Brendle \cite{bre}, Brendle and Marques \cite{bre-mar},
Khuri Marques and Schoen \cite{khu-mar-sch} in the last years.

When the boundary of the manifold is not empty, a necessary condition
is that $M$ is not conformally equivalent to the standard ball $\mathbb{B}^{n}$.
Compactness for boundary Yamabe problem has been studied firstly by
Felli and Ould Ahmedou \cite{FO03}, Han and Li \cite{HL}, Almaraz
\cite{A3}. 

In this context the case of scalar flat metrics is particularly interesting
since it leads to study a linear equation in the interior with a critical
nonlinear Neumann-type boundary condition 
\begin{equation}
\left\{ \begin{array}{ll}
-\Delta_{g}u+\frac{n-2}{4(n-1)}R_{g}u=0 & \text{ on }M\\
\frac{\partial}{\partial\nu}u+\frac{n-2}{2}h_{g}u=(n-2)u^{\frac{n}{n-2}} & \text{ on }\partial M
\end{array}\right.\label{eq:Pnonpert}
\end{equation}
where $R_{g}$ is the scalar curvature of $M$, $h_{g}$ is the mean
curvature on $\partial M$ and $\nu$ is the outward normal to the
boundary. The geometric meaning of (\ref{eq:Pnonpert}) is that if
$u$ is a solution of (\ref{eq:Pnonpert}) the scalar curvature of
the conformal metric $\tilde{g}=u^{\frac{4}{n-2}}g$ is zero and the
mean curvature of $\tilde{g}$ on the boundary of $M$ is $n-2$.
The Yamabe boundary problem in the case of scalar flat metrics can
be also seen as the multidimensional version of the Riemann Mapping
Theorem. 

Concerning problem (\ref{eq:Pnonpert}), Felli and Ould Ahmedou in
\cite{FO03} have proved compactness when $M$ is locally conformally
flat and the boundary is umbilic and Almaraz in \cite{A3} has proved
compactness when $n\ge7$ and the trace free second fundamental form
is non zero everywhere on $\partial M$, that is any point of the
boundary is non umbilic. In \cite{KMW19} Kim Musso and Wei showed
that compactness continues to hold when $n=4$ and when $n=6,7$ and
the trace free second fundamental form is non zero everywhere on the
boundary. 

Very recently, compactness has been proved by the authors in \cite{GM20}
for manifold with umbilic boundary when $n>8$ and the Weyl tensor
of $M$ is everywhere non zero on the boundary $\partial M$. In \cite{GMsub}
the authors extend the compactness result to manifold of dimension
$n=6,7,8$, when the boundary is umbilic and the Weyl tensor of $M$
is everywhere non zero on $\partial M$. 

An example of non compactness is given for $n\ge25$ and manifold
with umbilic boundary in \cite{A2}. We recall that the boundary of
$M$ is called umbilic if the trace free second fundamental form of
$\partial M$ is zero everywhere on $\partial M$.

Another interesting question is the stability problem. One can ask
whether or not the compactness property is preserved under perturbation
of the equation. This is equivalent to having or not uniform a priori
estimates for solutions of the perturbed problem. 

In the following we consider the problem
\begin{equation}
\left\{ \begin{array}{ll}
L_{g}v=0 & \text{ on }M\\
B_{g}v+(n-2)v^{\frac{n}{n-2}+\varepsilon}=0 & \text{ on }\partial M
\end{array}\right.\label{eq:Pmain}
\end{equation}
where $\varepsilon$ is a positive real parameter, $L_{g}:=\Delta_{g}-\frac{n-2}{4(n-1)}R_{g}$
is the conformal Laplacian and $B_{g}=-\frac{\partial}{\partial\nu}v-\frac{n-2}{2}h_{g}(x)v$
is the conformal boundary operator. In the next we will use $a(x):=\frac{n-2}{4(n-1)}R_{g}$
to simplify the notation.

We study the question of stability of the problem (\ref{eq:Pnonpert}).
It is clear that the problem is not stable with respect to supercritical
perturbation of the nonlinearity if we are able to build solutions
$v_{\varepsilon}$ of the perturbed problem (\ref{eq:Pmain}) which
blow up at one point of the manifold as the parameter $\varepsilon$
goes to zero.

Our main result is the following
\begin{thm}
\label{almaraz} Let $M$ be a manifold of positive type with umbilic
boundary $\partial M$. Suppose $n\ge8$ and that the Weyl tensor
$W_{g}$ is not vanishing on $\partial M$. 

Then there exists a solution $v_{\varepsilon}$ of (\ref{eq:Pmain})
such that $v_{\varepsilon}$ blows up when $\varepsilon\rightarrow0^{+}$.
\end{thm}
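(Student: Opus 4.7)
The plan is to implement a Lyapunov--Schmidt finite-dimensional reduction, in the same functional framework used in \cite{GM20,GMsub} to prove compactness, and exploit the fact that the obstruction which prevents compactness there will now be turned into a device for constructing blow-up solutions.

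First, I would fix the family of approximate solutions. The limit problem on the half-space $\mathbb{R}^{n}_{+}$ associated with \eqref{eq:Pnonpert} admits the explicit bubbles
\[
U_{\delta,y}(z) := \left(\frac{\delta}{(\delta+z_{n})^{2}+|z'-y|^{2}}\right)^{\frac{n-2}{2}},\qquad \delta>0,\ y\in\partial\mathbb{R}^{n}_{+},
\]
which are all the positive solutions of the linear equation with critical nonlinear Neumann condition. For a boundary point $\xi\in\partial M$ I would transplant $U_{\delta,0}$ via Fermi coordinates centered at $\xi$, multiply by a cut-off, and then correct it by the standard linear corrections (solving $L_{g}W = 0$ with prescribed boundary data) so that the resulting family $W_{\delta,\xi}$ produces the sharpest possible error in the energy estimate. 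This is essentially the same ansatz used in \cite{GM20,GMsub}, so I can recycle the pointwise/Sobolev estimates on $W_{\delta,\xi}$ and on $L_{g}W_{\delta,\xi}$, $B_{g}W_{\delta,\xi}+(n-2)W_{\delta,\xi}^{n/(n-2)+\varepsilon}$ from that analysis.

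Second, I would set up the reduction. Look for solutions of \eqref{eq:Pmain} of the form $v=W_{\delta,\xi}+\phi$ with $\phi$ orthogonal to the $n$-dimensional kernel of the linearization at $W_{\delta,\xi}$, which is spanned by $\partial_{\delta}W_{\delta,\xi}$ and the tangential derivatives $\partial_{\xi_{i}}W_{\delta,\xi}$ for $i=1,\dots,n-1$. The invertibility of the linearized operator on the orthogonal complement, with norm bounded uniformly in $(\delta,\xi)$ in a compact regime, is exactly the Fredholm property established in the compactness papers \cite{GM20,GMsub}. A contraction-mapping argument then yields a unique $\phi_{\varepsilon}(\delta,\xi)$ solving the auxiliary equation, with $\|\phi_{\varepsilon}\|_{H^{1}}=o(1)$ uniformly as $\varepsilon\to 0^{+}$, and $C^{1}$-dependence on $(\delta,\xi)$.

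Third, the original problem is reduced to finding critical points of the single-variable reduced functional $I_{\varepsilon}(\delta,\xi)=J_{\varepsilon}(W_{\delta,\xi}+\phi_{\varepsilon}(\delta,\xi))$. The heart of the argument is its asymptotic expansion. Using the umbilicity of $\partial M$ to kill the contributions involving the trace-free second fundamental form, and using the Fermi-coordinate expansion of $g$, one finds for $n\geq 8$ an expansion of the shape
\[
I_{\varepsilon}(\delta,\xi)=c_{0}+\alpha_{1}\varepsilon\log\delta+\alpha_{2}\varepsilon-\alpha_{3}|W_{g}(\xi)|^{2}\delta^{4}+o(\varepsilon)+o(\delta^{4}),
\]
(with a possible logarithmic correction in $n=8$), where the $\delta^{4}|W_{g}(\xi)|^{2}$-term is the leading geometric contribution, exactly as in \cite{GM20,GMsub}. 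I would then scale $\delta=d\,\varepsilon^{1/4}$ (modulo logs in dimension $8$) so that the $\varepsilon$-term and the Weyl term become of the same size, producing a reduced function of $d$ with a nondegenerate interior maximum provided $|W_{g}(\xi)|^{2}>0$. Choosing $\xi_{0}\in\partial M$ where $|W_{g}|^{2}$ attains a strict local maximum (possible thanks to the hypothesis $W_{g}\not\equiv 0$ on $\partial M$), I obtain a critical point of $I_{\varepsilon}$, and the corresponding $v_{\varepsilon}=W_{\delta_{\varepsilon},\xi_{\varepsilon}}+\phi_{\varepsilon}$ solves \eqref{eq:Pmain} and blows up at $\xi_{0}$.

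The main technical obstacle is the $C^{1}$-uniformity of the expansion of $I_{\varepsilon}$: a pure $C^{0}$ expansion is not enough to locate a critical point, and getting uniform control of $\partial_{\delta}I_{\varepsilon}$ and $\partial_{\xi}I_{\varepsilon}$ requires differentiating the implicit function $\phi_{\varepsilon}(\delta,\xi)$ and estimating each resulting term. A secondary difficulty is the logarithmic borderline behaviour in $n=8$, which forces one to keep track of the $\log\delta$ terms carefully in order to balance the scaling.
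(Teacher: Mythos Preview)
Your outline captures the overall architecture correctly (Lyapunov--Schmidt reduction, scaling $\delta\sim\varepsilon^{1/4}$, a Weyl-type term at order $\delta^{4}$), but there are two genuine gaps and one unnecessary worry.

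\textbf{The correction term is not optional.} Your ansatz is $W_{\delta,\xi}+\phi$; the paper's is $W_{\delta,q}+\delta^{2}V_{\delta,q}+\phi$, where $V_{\delta,q}$ comes from an explicit function $v_{q}$ solving the \emph{inhomogeneous} problem $-\Delta v_{q}=\bigl[\tfrac{1}{3}\bar R_{ikjl}y_{k}y_{l}+R_{ninj}y_{n}^{2}\bigr]\partial_{ij}^{2}U$ on $\mathbb{R}^{n}_{+}$ with the linearised boundary condition. This is \emph{not} ``solving $L_{g}W=0$ with prescribed boundary data''; it is designed to kill the $O(\delta^{2})$ curvature error in the interior equation. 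Without it the remainder satisfies only $\|R\|=O(\delta^{2})$, hence $\|\phi\|=O(\delta^{2})=O(\varepsilon^{1/2})$, and the error $\|\phi\|^{2}$ in the reduced energy is $O(\varepsilon)$, the \emph{same} order as the main term. The expansion you write would then be meaningless. With the correction one gets $\|R\|=O(\delta^{3})$ (with a $\log\delta$ when $n=8$), so $\|\phi\|^{2}=o(\varepsilon)$ and the reduction goes through.

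\textbf{The $\delta^{4}$ coefficient is not simply $-\alpha_{3}|W_{g}(\xi)|^{2}$.} The actual coefficient is
\[
\varphi(q)=\tfrac{1}{2}\int_{\mathbb{R}^{n}_{+}}v_{q}\Delta v_{q}\,dy-C_{1}|\bar W(q)|^{2}-(n-8)C_{2}R_{ninj}^{2}(q),
\]
which involves the correction $v_{q}$ itself. For $n>8$ the sign is immediate from $\int v_{q}\Delta v_{q}\le 0$, but for $n=8$ the last term vanishes and one needs the sharper inequality $\int v_{q}\Delta v_{q}\le -C_{3}R_{8i8j}^{2}(q)$ proved in \cite{GMsub}; together with the fact that on an umbilic boundary $W_{g}(q)=0$ iff both $\bar W(q)=0$ and $R_{ninj}(q)=0$, this gives $\varphi(q)<0$. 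Your sketch hides this, and it is precisely the ``logarithmic borderline'' you flag.

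\textbf{You do not need a $C^{1}$ expansion.} The hypothesis is that $W_{g}$ is \emph{nowhere} zero on $\partial M$ (not merely $\not\equiv 0$), so $\varphi(q)<0$ for every $q$. Then the model $I(\lambda,q)=\lambda^{4}\varphi(q)+C\log\lambda$ tends to $-\infty$ as $\lambda\to 0^{+}$ or $\lambda\to\infty$, uniformly in $q$, and attains a global maximum on some $[a,b]\times\partial M$. A global maximum on a compact set (with $\partial M$ boundaryless) is $C^{0}$-stable: any $C^{0}$-small perturbation still has an interior maximum, which is automatically a critical point. Hence the $C^{0}$-uniform expansion of $I_{\varepsilon}$ suffices, and the differentiation of $\phi_{\varepsilon}(\delta,\xi)$ you worry about is unnecessary.
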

Here $M$ of positive type means that there exists $C>0$ such that
\[
Q(u):=\frac{\int\limits _{M}\left(|\nabla u|^{2}+\frac{n-2}{4(n-1)}R_{g}u^{2}\right)dv_{g}+\int\limits _{\partial M}\frac{n-2}{2}h_{g}u^{2}d\sigma_{g}}{\left(\int\limits _{\partial M}|u|^{\frac{2(n-1)}{n-2}}d\sigma_{g}\right)^{\frac{n-2}{n-1}}}\ge C\text{ for any }u\in H^{1}(M)\smallsetminus\left\{ 0\right\} .
\]
We remark that this assumption on the positivity of $Q$ in natural
when we address to compactness questions in Yamabe problems since
if ${\displaystyle \inf_{u\in H^{1}(M)\smallsetminus\{0\}}Q(u)\le0}$,
then the solution of Yamabe problem is unique.

The stability of problem (\ref{eq:Pnonpert}) with respect to the
principal quantity of the boundary term has been studied in a series
of paper by the authors and by Pistoia, both in the case of non umbilic
boundary and in the case of umbilic boundary with Weyl tensor non
vanishing on the boundary. Firstly, they studied what happens linearly
perturbing the mean curvature term. This problem present a strong
analogy to the Yamabe problem when perturbing the scalar curvature
term (see, on this topic \cite{dru,dru-heb} and the references therein).
In fact, we have that the set of solutions is compact -and hence (\ref{eq:Pnonpert})
is stable- perturbing the mean curvature from below while we construct
a blowing up sequence when the perturbation is everywhere positive
on the boundary, and for a class of perturbation which are positive
in at least one point on the boundary. The result of compactness is
dealt in \cite{GMdcds} both in umbilic and non umbilic case while
for the construction of blowing up sequences for umbilic boundary
manifold we refer to \cite{GMP19}.

Concerning the exponent of the nonlinearity, all the compactness results
hold for $p\le\frac{n}{n-2}$, so the Yamabe boundary problem is stable
from below with respect to the critical exponent, while, in the present
paper we have that small perturbations above the critical exponent
imply blowing up solution when $n\ge8$, the boundary $\partial M$
is umbilic and the Weyl tensor is non vanishing on $\partial M$. 

As a final remark, we notice that in \cite{GMP19} we ask that the
manifold is umbilic, the Weyl tensor is never vanishing and that $n\ge11.$
The assumption on the dimension in this paper is technical, since
we ask some integrability condition when performing the Ljapounov
Schmidt procedure. In deed, in the present paper we perform more effective
computations in Lemma \ref{lem:R}. This method could be applied verbatim
in paper \cite{GMP19}, so we can reformulate the main result in dimension
$n\ge8$. 

\section{Preliminaries}

We recall here a series of preliminary result which are useful for
our result.

Since the manifold is of positive type, then 
\[
\left\langle \left\langle u,v\right\rangle \right\rangle _{g}=\int_{M}(\nabla_{g}u\nabla_{g}v+auv)d\mu_{g}+\frac{n-2}{2}\int_{\partial M}h_{g}uvd\sigma_{g}
\]
is an equivalent scalar product in $H_{g}^{1}$, which induces to
the equivalent norm $\|\cdot\|_{g}$. 

We define the exponent 
\[
s_{\varepsilon}=\frac{2(n-1)}{n-2}+n\varepsilon
\]
and the Banach space $\mathcal{H}_{g}:=H^{1}(M)\cap L^{s_{\varepsilon}}(\partial M)$
endowed with norm $\|u\|_{\mathcal{H}_{g}}=\|u\|_{g}+|u|_{L_{g}^{s_{\varepsilon}}(\partial M)}.$By
trace theorems, we have the following inclusion $W^{1,\tau}(M)\subset L^{t}(\partial M)$
for $t\le\tau\frac{n-1}{n-\tau}$. 

We recall the following result, by Nittka \cite[Th. 3.14]{Nit}
\begin{rem}
\label{rem:Nit}Let $\frac{2n}{n+2}\le q<\frac{n}{2}$, $r>0$. Then
there exists a constant $c$ such that if $f_{0}\in L^{q+r}(\Omega)$,
$\beta$ bounded and measurable and $g\in L^{\frac{(n-1)q}{n-q}+r}(\partial\Omega)$
and $u\in H^{1}(\Omega)$ is the unique weak solution of 
\[
\left\{ \begin{array}{ll}
Lu=f_{0} & \text{ on }\Omega\\
\frac{\partial}{\partial\nu}u+\beta u=g & \text{ on }\partial\Omega
\end{array}\right.
\]
where $L$ is a strictly elliptic second order operator, then 
\[
u\in L^{\frac{nq}{n-2q}}(\Omega),\ \left.u\right|_{\partial\Omega}L^{\frac{(n-1)q}{n-2q}}(\partial\Omega)\text{ and}
\]
\[
|u|_{L^{\frac{nq}{n-2q}}(\Omega)}+|u|_{L^{\frac{(n-1)q}{n-2q}}(\partial\Omega)}\le\left|f_{0}\right|_{L^{q+r}(\Omega)}+\left|g\right|_{L^{\frac{(n-1)q}{n-q}+r}(\partial\Omega)}
\]
\end{rem}
We consider $i:H^{1}(M)\rightarrow L^{\frac{2(n-1)}{n-2}}(\partial M)$
and its adjoint with respect to $\left\langle \left\langle \cdot,\cdot\right\rangle \right\rangle _{g}$
\[
i_{g}^{*}:L^{\frac{2(n-1)}{n}}(\partial M)\rightarrow H^{1}(M)
\]
defined by
\[
\left\langle \left\langle \varphi,i_{g}^{*}(f)\right\rangle \right\rangle _{g}=\int_{\partial M}\varphi fd\sigma_{g}\text{ for all }\varphi\in H^{1}
\]
so that $v=i_{g}^{*}(g)$ is the weak solution of the problem
\begin{equation}
\left\{ \begin{array}{ll}
-\Delta_{g}v+a(x)v=0 & \text{ on }M\\
\frac{\partial}{\partial\nu}v+\frac{n-2}{2}h_{g}(x)v=f & \text{ on }\partial M
\end{array}\right..\label{eq:istellasopra}
\end{equation}

By \cite[Th. 3.14]{Nit} (see Remark \ref{rem:Nit}) we have that,
if $v\in H^{1}$ is a solution of (\ref{eq:istellasopra}), then for
$\frac{2n}{n+2}\le q<\frac{n}{2}$ and $r>0$ it holds
\begin{equation}
|v|_{L^{\frac{(n-1)q}{n-2q}}(\partial M)}=|i_{g}^{*}(f)|_{L^{\frac{(n-1)q}{n-2q}}(\partial M)}\le|f|_{L^{\frac{(n-1)q}{n-q}+r}(\partial M)}.\label{eq:nittka}
\end{equation}
By this result, we can choose $q,r$ such that
\begin{equation}
\frac{(n-1)q}{n-2q}=\frac{2(n-1)}{n-2}+n\varepsilon\text{ and }\frac{(n-1)q}{n-q}+r=\frac{2(n-1)+n(n-2)\varepsilon}{n+(n-2)\varepsilon}\label{eq:nittka1}
\end{equation}
that is 
\[
q=\frac{2n+n^{2}\left(\frac{n-2}{n-1}\right)\varepsilon}{n+2+2n\left(\frac{n-2}{n-1}\right)\varepsilon}\text{ and }r=\frac{2(n-1)+n(n-2)\varepsilon}{n+(n-2)\varepsilon}-\frac{2(n-1)+n(n-2)\varepsilon}{n+\left(n-2\right)\left(\frac{n}{n-1}\right)\varepsilon}.
\]
 Set $f_{\varepsilon}(v)=(n-2)\left(v^{+}\right)^{\frac{n}{n-2}+\varepsilon}$,
we have that, if $v\in L_{g}^{\frac{2(n-1)}{n-2}+n\varepsilon}(\partial M)$,
then $f_{\varepsilon}(v)\in L_{g}^{\frac{2(n-1)+n(n-2)\varepsilon}{n+\varepsilon(n-2)}}(\partial M)$
and, in light of (\ref{eq:nittka}), also $i_{g}^{*}\left(f_{\varepsilon}(v)\right)\in L_{g}^{\frac{2(n-1)}{n-2}+n\varepsilon}(\partial M)$.

Thus we can recast then Problem (\ref{eq:Pmain}) as 
\begin{equation}
v=i_{g}^{*}\left(f_{\varepsilon}(v)\right),\ v\in\mathcal{H}_{g}.\label{eq:P*}
\end{equation}

The problem has also a variational structure: we can associate to
Problem (\ref{eq:Pmain}) the following functional, which is well
defined on $\mathcal{H}_{g}$. 
\begin{align}
J_{\varepsilon,g}(v):= & \frac{1}{2}\int_{M}|\nabla_{g}v|^{2}+av^{2}d\mu_{g}+\frac{n-2}{4}\int_{\partial M}h_{g}v^{2}d\sigma_{g}\label{eq:Jeps}\\
 & -\frac{(n-2)^{2}}{2(n-1)+\varepsilon(n-2)}\int_{\partial M}\left(v^{+}\right)^{\frac{2(n-1)}{n-2}+\varepsilon}d\sigma_{g}.\nonumber 
\end{align}

\begin{rem}
Since $\partial M$ is umbilic for any $q\in\partial M$, there exists
a metric $\tilde{g}_{q}=\tilde{g}$, conformal to $g$, $\tilde{g}_{q}=\Lambda_{q}^{\frac{4}{n-2}}g_{q}$
such that
\begin{equation}
|\text{det}\tilde{g}_{q}(y)|=1+O(|y|^{N})\label{eq:|g|}
\end{equation}
\begin{equation}
|\tilde{h}_{ij}(y)|=o(|y^{3}|)\label{eq:hij}
\end{equation}
\begin{align}
\tilde{g}^{ij}(y)= & \delta_{ij}+\frac{1}{3}\bar{R}_{ikjl}y_{k}y_{l}+R_{ninj}y_{n}^{2}\label{eq:gij}\\
 & +\frac{1}{6}\bar{R}_{ikjl,m}y_{k}y_{l}y_{m}+R_{ninj,k}y_{n}^{2}y_{k}+\frac{1}{3}R_{ninj,n}y_{n}^{3}\nonumber \\
 & +\left(\frac{1}{20}\bar{R}_{ikjl,mp}+\frac{1}{15}\bar{R}_{iksl}\bar{R}_{jmsp}\right)y_{k}y_{l}y_{m}y_{p}\nonumber \\
 & +\left(\frac{1}{2}R_{ninj,kl}+\frac{1}{3}\text{Sym}_{ij}(\bar{R}_{iksl}R_{nsnj})\right)y_{n}^{2}y_{k}y_{l}\nonumber \\
 & +\frac{1}{3}R_{ninj,nk}y_{n}^{3}y_{k}+\frac{1}{12}\left(R_{ninj,nn}+8R_{nins}R_{nsnj}\right)y_{n}^{4}+O(|y|^{5})\nonumber 
\end{align}
\begin{equation}
\bar{R}_{\tilde{g}_{q}}(y)=O(|y|^{2})\text{ and }\partial_{ii}^{2}\bar{R}_{\tilde{g}_{q}}(q)=-\frac{1}{6}|\bar{W}(q)|^{2}\label{eq:Rii}
\end{equation}
\begin{equation}
\bar{R}_{kl}(q)=R_{nn}(q)=R_{nk}(q)=0\label{eq:Ricci}
\end{equation}
uniformly with respect to $q\in\partial M$ and $y\in T_{q}(M)$.
Also,we have $\Lambda_{q}(q)=1$ and $\nabla\Lambda_{q}(q)=0$. This
results are contained in \cite{M1,KMW}. Here $\tilde{h}_{ij}$ is
the tensor of the second fundamental form referred to the metric $\tilde{g}$.
\end{rem}
The conformal Laplacian and the conformal boundary operator transform
under the change of metric $\tilde{g}_{q}=\Lambda_{q}^{\frac{4}{n-2}}g_{q}$
as follows:
\begin{align*}
L_{\tilde{g}_{q}}\varphi & =\Lambda_{q}^{-\frac{n+2}{n-2}}L_{g}(\Lambda_{q}\varphi)\\
B_{\tilde{g}_{q}}\varphi & =\Lambda_{q}^{-\frac{n}{n-2}}B_{g}(\Lambda_{q}\varphi).
\end{align*}
By these transformations we have that $v:=\Lambda_{q}u$ is a positive
solution of (\ref{eq:Pmain}), if and only if $u$ is a positive solution
of 
\begin{equation}
\left\{ \begin{array}{cc}
L_{\tilde{g}_{q}}u=0 & \text{ in }M\\
B_{\tilde{g}_{q}}u+(n-2)\Lambda_{q}^{\varepsilon}u^{\frac{n}{n-2}+\varepsilon}=0 & \text{ on }\partial M
\end{array}\right.\label{eq:Ptilde}
\end{equation}
From now on we set $\tilde{f}_{\varepsilon}(u)=(n-2)\Lambda_{q}^{\varepsilon}\left(u^{+}\right)^{\frac{n}{n-2}+\varepsilon}$.

Furthermore we have 
\[
\left\langle \left\langle \Lambda_{q}u,\Lambda_{q}v\right\rangle \right\rangle _{g}=\left\langle \left\langle u,v\right\rangle \right\rangle _{\tilde{g}}
\]
and, consequently, 
\[
\|\Lambda_{q}u\|_{g}=\|u\|_{\tilde{g}}.
\]
In addition, we have that $\Lambda_{q}u\in L_{g}^{s_{\varepsilon}}$
if and only if $u\in L_{\tilde{g}}^{s_{\varepsilon}}$, so $\Lambda_{q}u\in\mathcal{H}_{g}$
if and only if $u\in\mathcal{H}_{\tilde{g}}$. Finally, we can define
the functional $J_{\varepsilon,\tilde{g}}$ associated to (\ref{eq:Ptilde}),
as 
\begin{align*}
J_{\varepsilon,\tilde{g}}(u):= & \frac{1}{2}\int_{M}|\nabla_{\tilde{g}}u|^{2}+\tilde{a}u^{2}d\mu_{\tilde{g}}+\frac{n-2}{4}\int_{\partial M}h_{\tilde{g}}v^{2}d\sigma_{\tilde{g}}\\
 & -\frac{(n-2)^{2}}{2(n-1)+\varepsilon(n-2)}\Lambda_{q}\int_{\partial M}\left(u^{+}\right)^{\frac{2(n-1)}{n-2}+\varepsilon}d\sigma_{\tilde{g}},
\end{align*}
where $\tilde{a}=\frac{n-2}{4(n-1)}R_{\tilde{g}}$ , and we get
\[
J_{\varepsilon,g}(\Lambda_{q}u)=J_{\varepsilon,\tilde{g}}(u).
\]
so we can always switch between metrics $g$ and $\tilde{g}$ , and
this will be useful in the next. In the Section \ref{sec:reduction}
we emphasize other equivalences of the same kind. As a last remark,
we notice also that a solution of (\ref{eq:Ptilde}) can be expressed
by means of $i_{\tilde{g}}^{*}$, in fact $u$ solves (\ref{eq:Ptilde})
if and only if 
\[
u=i_{\tilde{g}}^{*}(\tilde{f}_{\varepsilon}(u)).
\]

\section{The finite dimensional reduction.\label{sec:reduction}}

Given $q\in\partial M$ and $\psi_{q}^{\partial}:\mathbb{R}_{+}^{n}\rightarrow M$
the Fermi coordinates in a neighborhood of $q$; we define 
\begin{align*}
W_{\delta,q}(\xi) & =U_{\delta}\left(\left(\psi_{q}^{\partial}\right)^{-1}(\xi)\right)\chi\left(\left(\psi_{q}^{\partial}\right)^{-1}(\xi)\right)=\\
 & =\frac{1}{\delta^{\frac{n-2}{2}}}U\left(\frac{y}{\delta}\right)\chi(y)=\frac{1}{\delta^{\frac{n-2}{2}}}U\left(x\right)\chi(\delta x)
\end{align*}
where $y=(z,t)$, with $z\in\mathbb{R}^{n-1}$ and $t\ge0$, $\delta x=y=\left(\psi_{q}^{\partial}\right)^{-1}(\xi)$
and $\chi$ is a radial cut off function, with support in ball of
radius $R$.

Here $U_{\delta}(y)=\frac{1}{\delta^{\frac{n-2}{2}}}U\left(\frac{y}{\delta}\right)$
is the one parameter family of solution of the problem 
\begin{equation}
\left\{ \begin{array}{ccc}
-\Delta U_{\delta}=0 &  & \text{on }\mathbb{R}_{+}^{n};\\
\frac{\partial U_{\delta}}{\partial t}=-(n-2)U_{\delta}^{\frac{n}{n-2}} &  & \text{on \ensuremath{\partial}}\mathbb{R}_{+}^{n}.
\end{array}\right.\label{eq:Udelta}
\end{equation}
and ${\displaystyle U(z,t):=\frac{1}{\left[(1+t)^{2}+|z|^{2}\right]^{\frac{n-2}{2}}}}$
is the standard bubble in $\mathbb{R}_{+}^{n}$.\\

Now, let us consider the linearized problem 
\begin{equation}
\left\{ \begin{array}{ccc}
 & -\Delta\phi=0 & \text{on }\mathbb{R}_{+}^{n},\\
 & \frac{\partial\phi}{\partial t}+nU^{\frac{2}{n-2}}\phi=0 & \text{on \ensuremath{\partial}}\mathbb{R}_{+}^{n},\\
 & \phi\in H^{1}(\mathbb{R}_{+}^{n}).
\end{array}\right.\label{eq:linearizzato}
\end{equation}
and it is well know that  every solution of (\ref{eq:linearizzato})
is a linear combination of the functions $j_{1},\dots,j_{n}$ defined
by . 
\begin{eqnarray}
j_{i}=\frac{\partial U}{\partial x_{i}},\ i=1,\dots n-1 &  & j_{n}=\frac{n-2}{2}U+\sum_{i=1}^{n}y_{i}\frac{\partial U}{\partial y_{i}}.\label{eq:sol-linearizzato}
\end{eqnarray}

Given $q\in\partial M$ we define, for $b=1,\dots,n$ 
\[
Z_{\delta,q}^{b}(\xi)=\frac{1}{\delta^{\frac{n-2}{2}}}j_{b}\left(\frac{1}{\delta}\left(\psi_{q}^{\partial}\right)^{-1}(\xi)\right)\chi\left(\left(\psi_{q}^{\partial}\right)^{-1}(\xi)\right)
\]
and we decompose $H^{1}(M)$ in the direct sum of the following two
subspaces 
\begin{align*}
\tilde{K}_{\delta,q} & =\text{Span}\left\langle \Lambda_{q}Z_{\delta,q}^{1},\dots,\Lambda_{q}Z_{\delta,q}^{n}\right\rangle \\
\tilde{K}_{\delta,q}^{\bot} & =\left\{ \varphi\in H^{1}(M)\ :\ \left\langle \left\langle \varphi,\Lambda_{q}Z_{\delta,q}^{b}\right\rangle \right\rangle _{g}=0,\ b=1,\dots,n\right\} 
\end{align*}
and we define the projections 
\[
\tilde{\Pi}=H^{1}(M)\rightarrow\tilde{K}_{\delta,q}\text{ and }\tilde{\Pi}^{\bot}=H^{1}(M)\rightarrow\tilde{K}_{\delta,q}^{\bot}.
\]

In order to give a good ansatz on the shape of the solution we need
to introduce the function $v_{q}:\mathbb{R}_{+}^{n}\rightarrow\mathbb{R}$
which is a solution of the linear problem 
\begin{equation}
\left\{ \begin{array}{ccc}
-\Delta v_{q}=\left[\frac{1}{3}\bar{R}_{ijkl}(q)y_{k}y_{l}+R_{ninj}(q)y_{n}^{2}\right]\partial_{ij}^{2}U &  & \text{on }\mathbb{R}_{+}^{n}\\
\frac{\partial v}{\partial y_{n}}=-nU^{\frac{2}{n-2}}v_{q} &  & \text{on }\partial\mathbb{R}_{+}^{n}
\end{array}\right.\label{eq:vqdef}
\end{equation}
This function is a key tool for several estimates in what follows.
In fact, a good choice of $v_{q}$ we allow us to get the correct
size of the remainder term in the finite dimensional reduction (Lemma
\ref{lem:R}). 
\begin{rem}
\label{rem:vq}There exists a unique $v_{q}:\mathbb{R}_{+}^{n}\rightarrow\mathbb{R}$
solution of the problem (\ref{eq:vqdef}) $L^{2}(\mathbb{R}_{+}^{n})$-ortogonal
to $j_{b}$ for all $b=1,\dots,n$. Moreover it holds
\begin{equation}
|\nabla^{\tau}v_{q}(y)|\le C(1+|y|)^{4-\tau-n}\text{ for }\tau=0,1,2,\label{eq:gradvq}
\end{equation}
\begin{equation}
\int_{\partial\mathbb{R}_{+}^{n}}U^{\frac{n}{n-2}}(t,z)v_{q}(t,z)dz=0\label{eq:Uvq}
\end{equation}
and 
\begin{equation}
\int_{\partial\mathbb{R}_{+}^{n}}v_{q}(t,z)\Delta v_{q}(t,z)dz\le0,\label{new}
\end{equation}
where $y\in\mathbb{R}_{+}^{n}$, $y=(t,z)$ with $t\ge0$ and $z\in\mathbb{R}^{n-1}$.
In addition, the map $q\mapsto v_{q}$ is in $C^{2}(\partial M)$.
\end{rem}
The proof of this remark can be found in \cite[Lemma 3]{GMP19} and
will be omitted.

At this point, given $q\in\partial M$ we define, similarly to $W_{\delta,q}$,
the function 
\[
V_{\delta,q}(\xi)=\frac{1}{\delta^{\frac{n-2}{2}}}v_{q}\left(\frac{1}{\delta}\left(\psi_{q}^{\partial}\right)^{-1}(\xi)\right)\chi\left(\left(\psi_{q}^{\partial}\right)^{-1}(\xi)\right)
\]
and 
\[
\left(v_{q}\right)_{\delta}(y)=\frac{1}{\delta^{\frac{n-2}{2}}}v_{q}\left(\frac{y}{\delta}\right),
\]
where $v_{q}$ is chosen as in Remark \ref{rem:vq}.

We look for solution of (\ref{eq:P*}) having the form 
\[
v=\Lambda_{q}u=\tilde{W}_{\delta,q}+\delta^{2}\tilde{V}_{\delta,q}+\tilde{\phi}\text{ with }\tilde{\phi}\in\tilde{K}_{\delta,q}^{\bot}\cap\mathcal{H}.
\]
where we used the intuitive notation
\[
\tilde{W}_{\delta,q}=\Lambda_{q}W_{\delta,q}\ \ \tilde{V}_{\delta,q}=\Lambda_{q}V_{\delta,q}\text{ and }\tilde{\phi}=\Lambda_{q}\phi
\]
We can rewrite, in light of the previous orthogonal decomposition,
Problem (\ref{eq:P*}) (and so Problem (\ref{eq:Pmain})) as 
\begin{align}
\tilde{\Pi}\left\{ \tilde{W}_{\delta,q}+\delta^{2}\tilde{V}_{\delta,q}+\tilde{\phi}-i_{g}^{*}\left[f_{\varepsilon}(\tilde{W}_{\delta,q}+\delta^{2}\tilde{V}_{\delta,q}+\tilde{\phi})\right]\right\}  & =0\label{eq:P-K}\\
\tilde{\Pi}^{\bot}\left\{ \tilde{W}_{\delta,q}+\delta^{2}\tilde{V}_{\delta,q}+\tilde{\phi}-i_{g}^{*}\left[f_{\varepsilon}(\tilde{W}_{\delta,q}+\delta^{2}\tilde{V}_{\delta,q}+\tilde{\phi})\right]\right\}  & =0.\label{eq:P-Kort}
\end{align}
We stress out than we can proceed in analogous way in the manifold
$(M,\tilde{g})$. In this case we should define 
\begin{align*}
K_{\delta,q} & =\text{Span}\left\langle Z_{\delta,q}^{1},\dots,Z_{\delta,q}^{n}\right\rangle \\
K_{\delta,q}^{\bot} & =\left\{ \varphi\in H^{1}(M)\ :\ \left\langle \left\langle \varphi,Z_{\delta,q}^{b}\right\rangle \right\rangle _{\tilde{g}}=0,\ b=1,\dots,n\right\} ,
\end{align*}
and we should ask that $u=W_{\delta,q}+\delta^{2}V_{\delta,q}+\phi$,
recasting (\ref{eq:Ptilde}) as the couple of equations
\begin{align}
\Pi\left\{ W_{\delta,q}+\delta^{2}V_{\delta,q}+\phi-i_{\tilde{g}}^{*}\left[\tilde{f}_{\varepsilon}(W_{\delta,q}+\delta^{2}V_{\delta,q}+\phi)\right]\right\}  & =0\label{eq:Pi-1}\\
\Pi^{\bot}\left\{ W_{\delta,q}+\delta^{2}V_{\delta,q}+\phi-i_{\tilde{g}}^{*}\left[\tilde{f}_{\varepsilon}(W_{\delta,q}+\delta^{2}V_{\delta,q}+\phi)\right]\right\}  & =0.\label{eq:Pibot-1}
\end{align}
Roughly speaking, we are allowed to move the tilde symbol from solutions
to problems and vice versa, so in any moment we can choose in which
metric and with which functional it is more convenient to work. 

Coming back to problem (\ref{eq:P-Kort}), we define the linear operator
$L:\tilde{K}_{\delta,q}^{\bot}\cap\mathcal{H}_{g}\rightarrow\tilde{K}_{\delta,q}^{\bot}\cap\mathcal{H}_{g}$
as
\begin{equation}
L(\tilde{\phi})=\tilde{\Pi}^{\bot}\left\{ \tilde{\phi}-i_{g}^{*}\left(f_{\varepsilon}'(\tilde{W}_{\delta,q}+\delta^{2}\tilde{V}_{\delta,q})[\tilde{\phi}]\right)\right\} ,\label{eq:defL-1}
\end{equation}
a nonlinear term $N(\tilde{\Phi})$ and a remainder term $R$ as 
\begin{align}
N(\tilde{\phi})= & \tilde{\Pi}^{\bot}\left\{ i_{g}^{*}\left(f_{\varepsilon}(\tilde{W}_{\delta,q}+\delta^{2}\tilde{V}_{\delta,q}+\tilde{\phi})-f_{\varepsilon}(\tilde{W}_{\delta,q}+\delta^{2}\tilde{V}_{\delta,q})-f'_{\varepsilon}(\tilde{W}_{\delta,q}+\delta^{2}\tilde{V}_{\delta,q})[\tilde{\phi}]\right)\right\} \label{eq:defN-1}\\
R= & \tilde{\Pi}^{\bot}\left\{ i_{g}^{*}\left(f_{\varepsilon}(\tilde{W}_{\delta,q}+\delta^{2}\tilde{V}_{\delta,q})\right)-\tilde{W}_{\delta,q}-\delta^{2}\tilde{V}_{\delta,q}\right\} ,\label{eq:defR-1}
\end{align}
so equation (\ref{eq:P-Kort}) becomes

\[
L(\tilde{\phi})=N(\tilde{\phi})+R.
\]
The rest of this section is devoted to show that for any choice of
$\delta,q$ a solution $\tilde{\phi}$ of (\ref{eq:P-Kort}) exists.
\begin{lem}
\label{lem:L}Let $\delta=\varepsilon^{\frac{1}{4}}\lambda$. For
$a,b\in\mathbb{R}$, $0<a<b$ there exists a positive constant $C_{0}=C_{0}(a,b)$
such that, for $\varepsilon$ small, for any $q\in\partial M$, for
any $\lambda\in[a,b]$ and for any $\phi\in K_{\delta,q}^{\bot}\cap\mathcal{H}$
there holds
\[
\|L_{\delta,q}(\phi)\|_{\mathcal{H}}\ge C_{0}\|\phi\|_{\mathcal{H}}.
\]
\end{lem}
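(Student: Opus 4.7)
The natural approach is a proof by contradiction combined with a blow-up analysis at the concentration point. The plan is to assume that the conclusion fails, so that there exist sequences $\varepsilon_n\to 0$, $\lambda_n\to\lambda_0\in[a,b]$, $q_n\to q_0\in\partial M$, and $\phi_n\in\tilde K_{\delta_n,q_n}^\perp\cap\mathcal{H}_g$ with $\|\phi_n\|_{\mathcal{H}_g}=1$ but $\|L_{\delta_n,q_n}(\phi_n)\|_{\mathcal{H}_g}\to 0$. Setting $h_n:=L_{\delta_n,q_n}(\phi_n)$, the defining equation \eqref{eq:defL-1} yields, after applying the definition of $i_g^*$, an equation of the form
\[
\phi_n - i_g^*\bigl(f'_{\varepsilon_n}(\tilde W_{\delta_n,q_n}+\delta_n^2\tilde V_{\delta_n,q_n})[\phi_n]\bigr)=h_n+\omega_n,
\]
where $\omega_n\in\tilde K_{\delta_n,q_n}$ is a Lagrange-multiplier type term that I will show goes to zero in $\mathcal{H}_g$ by testing against the $\Lambda_{q_n}Z_{\delta_n,q_n}^b$ and using the near orthogonality of the $j_b$'s (together with the bounds on $v_q$ from Remark \ref{rem:vq}).

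Next comes the blow-up step. I would switch to the conformal metric $\tilde g_{q_n}$ (using the equivalence $J_{\varepsilon,g}(\Lambda_q u)=J_{\varepsilon,\tilde g}(u)$ and the corresponding identification of $\mathcal{H}$-norms) and work in Fermi coordinates $\psi_{q_n}^\partial$. I rescale by $\tilde\phi_n(y):=\delta_n^{(n-2)/2}\phi_n(\psi_{q_n}^\partial(\delta_n y))$ on the expanding half-space $\mathbb{R}^n_+\cap\{|y|<R/\delta_n\}$, extending by the cutoff $\chi$. A standard computation, using the expansions \eqref{eq:|g|}--\eqref{eq:Ricci} for $\tilde g_{q_n}$, shows that $\|\tilde\phi_n\|_{H^1(\mathbb{R}^n_+)}\le C$, so a subsequence satisfies $\tilde\phi_n\rightharpoonup\phi_\infty$ weakly in $H^1(\mathbb{R}^n_+)$ and strongly in $L^2_{\mathrm{loc}}$. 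Passing to the limit in the rescaled equation (using that $f'_{\varepsilon_n}(\tilde W+\delta^2\tilde V)$ rescales to $nU^{2/(n-2)}\delta_{\{t=0\}}$ in the sense of distributions, and that $h_n\to 0$) we obtain that $\phi_\infty$ is a bounded $H^1$ solution of the linearized problem \eqref{eq:linearizzato}.

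Now I use the orthogonality. Since $\phi_n\in\tilde K_{\delta_n,q_n}^\perp$, testing the identities $\langle\langle\phi_n,\Lambda_{q_n}Z_{\delta_n,q_n}^b\rangle\rangle_g=0$, rewriting them in Fermi coordinates and passing to the limit yields $\int_{\partial\mathbb{R}^n_+}nU^{2/(n-2)}\phi_\infty j_b\,dz=0$ for every $b=1,\dots,n$. Combined with the classification of solutions of \eqref{eq:linearizzato} recalled in \eqref{eq:sol-linearizzato}, this forces $\phi_\infty\equiv 0$. Hence the $H^1$-mass of $\tilde\phi_n$ escapes from every compact set, which, using elliptic regularity to get uniform decay at infinity for the rescaled equation, contradicts the normalization after one checks that $\|\phi_n\|_g\to 0$.

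The final ingredient is to upgrade the $\|\cdot\|_g$ convergence to $\|\cdot\|_{\mathcal{H}_g}$: one must also show that $|\phi_n|_{L^{s_{\varepsilon_n}}(\partial M)}\to 0$, since otherwise the normalization $\|\phi_n\|_{\mathcal{H}_g}=1$ could be sustained purely by the boundary $L^{s_\varepsilon}$ piece. This is the main technical obstacle; I would handle it by invoking the regularity estimate of Remark \ref{rem:Nit} applied to the identity above: with the specific choice of $q,r$ made in \eqref{eq:nittka1}, the right-hand side controls $|\phi_n|_{L^{s_{\varepsilon_n}}(\partial M)}$ by the $L^{\frac{(n-1)q}{n-q}+r}(\partial M)$ norm of $f'_{\varepsilon_n}(\tilde W+\delta^2\tilde V)[\phi_n]$ plus $\|h_n\|_{\mathcal{H}_g}$. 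A Hölder argument, splitting $\partial M$ into a ball of radius $\sim\sqrt{\delta_n}$ around $q_n$ (where the concentration lives and where one uses the just-proved $L^2_{\mathrm{loc}}$ smallness of the rescaled $\tilde\phi_n$) and its complement (where $f'_{\varepsilon_n}(\tilde W+\delta^2\tilde V)$ is pointwise small), then shows this quantity is $o(1)$, giving the required contradiction and completing the proof.
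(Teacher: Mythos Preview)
The paper does not actually prove this lemma: it simply states that the proof is very similar to \cite[Lemma~2]{GMP16} and omits it. Your outline is precisely the standard contradiction/blow-up argument that is carried out in that reference (and in many analogous Lyapunov--Schmidt reductions): assume the bound fails along a sequence, rescale in Fermi coordinates around $q_n$, pass to a weak limit $\phi_\infty$ solving the linearized problem \eqref{eq:linearizzato} on $\mathbb{R}^n_+$, use the orthogonality $\phi_n\in\tilde K_{\delta_n,q_n}^\perp$ to force $\phi_\infty=0$, and then derive a contradiction with the normalization $\|\phi_n\|_{\mathcal{H}_g}=1$.

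Your identification of the one genuinely new feature compared with the purely $H^1$ setting---namely that one must also kill the boundary piece $|\phi_n|_{L^{s_{\varepsilon_n}}(\partial M)}$, and that this is done via Nittka's estimate (Remark~\ref{rem:Nit}) with the exponent choice \eqref{eq:nittka1}---is exactly the point that distinguishes the $\mathcal{H}_g$-version from the $H^1$-version of the lemma. So your approach is correct and coincides with what the paper defers to \cite{GMP16}.
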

\begin{proof}
The proof of this Lemma is very similar to the proof of \cite[Lemma 2]{GMP16}
and will be omitted.
\end{proof}
\begin{lem}
\label{lem:R}It holds
\[
\|R\|_{\mathcal{H}_{g}}=\left\{ \begin{array}{cc}
\delta^{-O^{+}(\varepsilon)}\left\{ O\left(\delta^{3}\log\delta\right)+O(\varepsilon\log\delta)+O(\varepsilon)\right\}  & \text{ if }n=8\\
O\left(\delta^{3}\right)+\delta^{-O^{+}(\varepsilon)}\left\{ O(\varepsilon\log\delta)+O(\varepsilon)\right\}  & \text{\text{ if }}n>8
\end{array}\right.
\]
where $0<O^{+}(\varepsilon)<C\varepsilon$ for some positive constant
$C$. In addition, with the choice $\delta=\varepsilon^{\frac{1}{4}}\lambda$
we have that 
\[
\|R\|_{\mathcal{H}_{g}}=\left\{ \begin{array}{cc}
O\left(\varepsilon^{\frac{3}{4}}\log\varepsilon\right) & \text{ if }n=8\\
O\left(\varepsilon^{\frac{3}{4}}\right) & \text{\text{ if }}n>8
\end{array}\right..
\]
\end{lem}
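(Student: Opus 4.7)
The plan is to rewrite $R$, via the conformal change $g\leftrightarrow\tilde g$, as the $i_{\tilde g}^*$-image of two explicit error densities: a boundary density $E_\partial$ measuring how far $W_{\delta,q}+\delta^2V_{\delta,q}$ is from solving the nonlinear boundary condition, and an interior density $E_{\mathrm{int}}=L_{\tilde g}(W_{\delta,q}+\delta^2V_{\delta,q})$. Since $\tilde\Pi^{\bot}$ is bounded and conformal invariance gives $\|R\|_{\mathcal H_g}=\|R\|_{\mathcal H_{\tilde g}}$, testing against any $\varphi\in H^1(M)$ and integrating by parts in $\langle\langle\cdot,\cdot\rangle\rangle_{\tilde g}$ yields the identity
\begin{equation*}
\langle\langle i_{\tilde g}^*(\tilde f_\varepsilon(u_0))-u_0,\varphi\rangle\rangle_{\tilde g}=\int_{M}L_{\tilde g}(u_0)\,\varphi\,d\mu_{\tilde g}+\int_{\partial M}\bigl[\tilde f_\varepsilon(u_0)+B_{\tilde g}(u_0)\bigr]\varphi\,d\sigma_{\tilde g},
\end{equation*}
with $u_0:=W_{\delta,q}+\delta^2V_{\delta,q}$, so that \eqref{eq:nittka}--\eqref{eq:nittka1} reduce the $\|\cdot\|_{\mathcal H}$-estimate of $R$ to bounding $|E_\partial|_{L^{(n-1)q/(n-q)+r}(\partial M)}$ and $|E_{\mathrm{int}}|_{L^{q+r}(M)}$.

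The boundary term I would handle by a Taylor expansion of $\tilde f_\varepsilon$ around $W_{\delta,q}$ written in Fermi coordinates centered at $q$. By \eqref{eq:Udelta} the leading part of $B_{\tilde g}W_{\delta,q}$ cancels $(n-2)W_{\delta,q}^{n/(n-2)}$, and by the normal-derivative condition in \eqref{eq:vqdef} the leading part of $B_{\tilde g}(\delta^2V_{\delta,q})$ cancels the first-order Taylor contribution $n\delta^2 W_{\delta,q}^{2/(n-2)}V_{\delta,q}$; the umbilicity bound \eqref{eq:hij} makes the remaining $h_{\tilde g}u_0$ piece $o(|y|^3)$. What survives is the quadratic Taylor remainder in $\delta^2V_{\delta,q}$ together with the $\varepsilon$-perturbation arising from the expansions $(W+\delta^2V)^\varepsilon=1+\varepsilon\log(W+\delta^2V)+O(\varepsilon^2\log^2(W+\delta^2V))$ and $\Lambda_q^\varepsilon=1+\varepsilon\log\Lambda_q+O(\varepsilon^2)$. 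Rescaling $y=\delta x$ converts $\varepsilon\log W_{\delta,q}$ into $\varepsilon(\log\delta+\log U)$, while the extra exponent $r>0$ built into \eqref{eq:nittka1} absorbs a $U^{O(\varepsilon)}$ factor that produces the $\delta^{-O^{+}(\varepsilon)}$ prefactor in the statement.

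The interior term I would compute from $L_{\tilde g}=-|\tilde g|^{-1/2}\partial_i(|\tilde g|^{1/2}\tilde g^{ij}\partial_j)+\tilde a$ combined with the expansions \eqref{eq:|g|}, \eqref{eq:gij}, \eqref{eq:Rii}. Since $\Delta U=0$ on the half-space, the first non-vanishing contribution to $-\tilde g^{ij}\partial_{ij}W_{\delta,q}$ is exactly $\delta^{-n/2}[\tfrac13\bar R_{ikjl}(q)y_ky_l+R_{ninj}(q)y_n^2]\partial_{ij}U(y/\delta)$; by construction of $v_q$ via \eqref{eq:vqdef}, $\delta^2 L_{\tilde g}V_{\delta,q}$ cancels this contribution exactly. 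What is left consists of the cubic and quartic metric terms from \eqref{eq:gij} acting on $\partial^2U$, the contribution $\tilde a\,W_{\delta,q}$ with $\tilde a=O(|y|^2)$ by \eqref{eq:Rii}--\eqref{eq:Ricci}, and the subleading first-derivative pieces from $\partial_i(|\tilde g|^{1/2}\tilde g^{ij})\partial_jU$. A direct $y=\delta x$ rescaling, together with the decay $|\nabla^\tau U|\lesssim(1+|y|)^{2-\tau-n}$, yields $|E_{\mathrm{int}}|_{L^{q+r}(M)}=O(\delta^3)$ when $n>8$, deteriorating to $O(\delta^3\log\delta)$ in the borderline dimension $n=8$.

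The main obstacle is precisely the case $n=8$: several of the cubic-weighted $U$-integrals entering $E_{\mathrm{int}}$ become logarithmically divergent there, and one must arrange the bookkeeping so that the $\log\delta$ factor appears only as a prefactor of $\delta^3$ and does not multiply the $\varepsilon$-terms or the $\delta^{-O^{+}(\varepsilon)}$ factor. Summing the three error types and substituting $\delta=\varepsilon^{1/4}\lambda$ then gives the announced $O(\varepsilon^{3/4}\log\varepsilon)$ rate in dimension $8$ and the $O(\varepsilon^{3/4})$ rate in higher dimensions, finishing the proof.
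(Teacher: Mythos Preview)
Your proposal is correct and follows essentially the same route as the paper. The only organizational difference is that the paper separates the two pieces of the $\mathcal H_g$-norm explicitly: in Step~1 it bounds $\|R\|_g$ by taking $\varphi=R$ in your duality identity (this is where the integrals $I_1,I_2,I_3$ arise, matching your $E_{\mathrm{int}}$ and the geometric part of $E_\partial$), and in Step~2 it bounds $|R|_{L^{s_\varepsilon}(\partial M)}$ via Nittka with the perturbed exponents, which is where the $\delta^{-O^+(\varepsilon)}$ factor enters; within each step the paper also first adds and subtracts $i_g^*(f_0(\cdot))$ to isolate the $\varepsilon$-perturbation from the purely geometric error, whereas you treat both at once inside $E_\partial$. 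All the cancellations you list (from \eqref{eq:Udelta}, the boundary and interior equations in \eqref{eq:vqdef}, the umbilicity bound \eqref{eq:hij}, and the metric expansion \eqref{eq:gij}) are exactly the ones the paper uses, and your identification of the $n=8$ logarithm as coming from the borderline integrability of $(1+|x|)^{3-n}$ in $L^{2n/(n+2)}$ is the same as in the paper.
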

\begin{proof}
\textbf{Step 1.} It holds 
\begin{equation}
\|R\|_{g}=\left\{ \begin{array}{cc}
O\left(\delta^{3}\log\delta\right)+O(\varepsilon\log\delta)+O(\varepsilon) & \text{ if }n=8\\
O\left(\delta^{3}\right)+O(\varepsilon\log\delta)+O(\varepsilon) & \text{\text{ if }}n>8
\end{array}\right.\label{eq:Rg}
\end{equation}
We have 
\begin{align*}
\left\Vert R\right\Vert _{g} & \le\left\Vert i_{g}^{*}\left(f_{\varepsilon}(\tilde{W}_{\delta,q}+\delta^{2}\tilde{V}_{\delta,q})\right)-i_{g}^{*}\left(f_{0}(\tilde{W}_{\delta,q}+\delta^{2}\tilde{V}_{\delta,q})\right)\right\Vert _{g}\\
 & +\left\Vert i_{g}^{*}\left(f_{0}(\tilde{W}_{\delta,q}+\delta^{2}\tilde{V}_{\delta,q})\right)-\tilde{W}_{\delta,q}-\delta^{2}\tilde{V}_{\delta,q}\right\Vert _{g},
\end{align*}
and we start by estimating the second term. By definition of $i_{g}^{*}$
there exists $\Gamma=i_{g}^{*}\left(f_{0}(\tilde{W}_{\delta,q}+\delta^{2}\tilde{V}_{\delta,q})\right)$,
that is a function $\Gamma$ solving
\begin{equation}
\left\{ \begin{array}{ll}
-\Delta_{g}\Gamma+a(x)\Gamma=0 & \text{ on }M\\
\frac{\partial}{\partial\nu}\Gamma+\frac{n-2}{2}h_{g}(x)\Gamma=f_{0}(\tilde{W}_{\delta,q}+\delta^{2}\tilde{V}_{\delta,q}) & \text{ on }\partial M
\end{array}\right..\label{eq:gamma}
\end{equation}
So we have
\begin{multline*}
\left\Vert i_{g}^{*}\left(f_{0}(\tilde{W}_{\delta,q}+\delta^{2}\tilde{V}_{\delta,q}\right)-\tilde{W}_{\delta,q}-\delta^{2}\tilde{V}_{\delta,q}\right\Vert _{g}^{2}=\|\Gamma-\tilde{W}_{\delta,q}-\delta^{2}\tilde{V}_{\delta,q}\|_{g}^{2}\\
=\int_{M}\left[-\Delta_{g}(\Gamma-\tilde{W}_{\delta,q}-\delta^{2}\tilde{V}_{\delta,q})+a(\Gamma-\tilde{W}_{\delta,q}-\delta^{2}\tilde{V}_{\delta,q})\right](\Gamma-\tilde{W}_{\delta,q}-\delta^{2}\tilde{V}_{\delta,q})d\mu_{g}\\
+\int_{\partial M}h_{g}(\Gamma-\tilde{W}_{\delta,q}-\delta^{2}\tilde{V}_{\delta,q})^{2}d\sigma_{g}\\
+\int_{\partial M}\left[\frac{\partial}{\partial\nu}(\Gamma-\tilde{W}_{\delta,q}-\delta^{2}\tilde{V}_{\delta,q})\right](\Gamma-\tilde{W}_{\delta,q}-\delta^{2}\tilde{V}_{\delta,q})d\sigma_{g}\\
=\int_{M}\left[\Delta_{g}(\tilde{W}_{\delta,q}+\delta^{2}\tilde{V}_{\delta,q})-a(\tilde{W}_{\delta,q}+\delta^{2}\tilde{V}_{\delta,q})\right](\Gamma-\tilde{W}_{\delta,q}-\delta^{2}\tilde{V}_{\delta,q})d\mu_{g}\\
-\int_{\partial M}h_{g}(\tilde{W}_{\delta,q}+\delta^{2}\tilde{V}_{\delta,q})(\Gamma-\tilde{W}_{\delta,q}-\delta^{2}\tilde{V}_{\delta,q})d\sigma_{g}\\
+\int_{\partial M}\left[f_{0}(\tilde{W}_{\delta,q}+\delta^{2}\tilde{V}_{\delta,q})-\frac{\partial}{\partial\nu}(\tilde{W}_{\delta,q}+\delta^{2}\tilde{V}_{\delta,q})\right](\Gamma-\tilde{W}_{\delta,q}-\delta^{2}\tilde{V}_{\delta,q})d\sigma_{g}\\
=:I_{1}+I_{2}+I_{3}.
\end{multline*}
We have
\begin{align*}
I_{2} & =\int_{\partial M}h_{\tilde{g}}(W_{\delta,q}+\delta^{2}V_{\delta,q})(\Lambda_{q}^{-1}R)d\sigma_{\tilde{g}}\\
 & \le C|h_{\tilde{g}}(W_{\delta,q}+\delta^{2}V_{\delta,q})|_{L_{\tilde{g}}^{\frac{2(n-1)}{n}}(\partial M)}\|\Lambda_{q}^{-1}R\|_{\tilde{g}}
\end{align*}
Set $B_{1/\delta}^{n-1}=\left\{ z\in\mathbb{R}^{n-1},\ |z|\le1/\delta\right\} $,
we have
\begin{align*}
|h_{\tilde{g}}(W_{\delta,q}+\delta^{2}V_{\delta,q})|_{L_{\tilde{g}}^{\frac{2(n-1)}{n}}(\partial M)} & =O(\delta)\left|h_{\tilde{g}}(\delta z)(U(z)-\delta^{2}v_{q}(z))\right|_{L^{\frac{2(n-1)}{n}}(B_{1/\delta}^{n-1})}.
\end{align*}
 Since $z\le1/\delta$ we have that $\delta(1+|z|)=O(1)$. We recall
that $|\nabla^{\tau}v_{q}(y)|\le C(1+|y|)^{4-\tau-n}$ by (\ref{eq:gradvq})
and that $|\nabla^{\tau}U(y)|\le C(1+|y|)^{2-\tau-n}$ for $\tau=0,1,2$.
By \ref{eq:hij} we have also that $h_{\tilde{g}_{q}}(q)=h_{\tilde{g}_{q},i}(q)=h_{\tilde{g}_{q},ik}(q)=0$,
so, 
\begin{align}
|h_{\tilde{g}}(W_{\delta,q}+\delta^{2}V_{\delta,q})|_{L_{\tilde{g}}^{\frac{2(n-1)}{n}}(\partial M)} & =O(\delta)\left|h_{\tilde{g}}(\delta z)(1+|z|)^{2-n}\right|_{L^{\frac{2(n-1)}{n}}(B_{1/\delta}^{n-1})}\nonumber \\
 & =O(\delta^{3})\left||z|^{2}(1+|z|)^{2-n}\right|_{L^{\frac{2(n-1)}{n}}(B_{1/\delta}^{n-1})}\nonumber \\
 & =O(\delta^{3}),\label{eq:hg-resto}
\end{align}
since $|z|^{2}(1+|z|)^{2-n}\le(1+|z|)^{4-n}$ and $\left|(1+|z|)^{4-n}\right|_{L^{\frac{2(n-1)}{n}}(B_{1/\delta}^{n-1})}$
is bounded when $n>8$ or $\left|(1+|z|)^{4-n}\right|_{L^{\frac{2(n-1)}{n}}(B_{1/\delta}^{n-1})}=O(\log\delta)$
when $n=8$. Thus
\[
I_{2}=O(\delta^{3})\|\Lambda_{q}^{-1}R\|_{\tilde{g}}=\left\{ \begin{array}{cc}
O(\delta^{3}\log\delta)\|R\|_{g} & \text{ if }n=8\\
O(\delta^{3})\|R\|_{g} & \text{ if }n>8
\end{array}\right..
\]
For $I_{1}$ we proceed in a similar way, having

\begin{align*}
I_{1} & =\int_{M}\left[\Delta_{\tilde{g}}(W_{\delta,q}+\delta^{2}V_{\delta,q})-\tilde{a}(W_{\delta,q}+\delta^{2}V_{\delta,q})\right](\Lambda_{q}^{-1}R)d\mu_{\tilde{g}}\\
 & \le\left|\Delta_{\tilde{g}}(W_{\delta,q}+\delta^{2}V_{\delta,q})-\tilde{a}(W_{\delta,q}+\delta^{2}V_{\delta,q})\right|_{L_{\tilde{g}}^{\frac{2n}{n+2}}(M)}\|\Lambda_{q}^{-1}R\|_{\tilde{g}}.
\end{align*}
 Set $B_{1/\delta}^{n}=\left\{ z\in\mathbb{R}^{n},\ |z|\le1/\delta\right\} $.
By \cite[page 1609]{M1}, we have $R_{\tilde{g}}(0)=0$, so we get
\begin{align*}
\left|\tilde{a}(W_{\delta,q}+\delta^{2}V_{\delta,q})\right|_{L_{\tilde{g}}^{\frac{2n}{n+2}}(M)} & =O(\delta^{2})\left|R_{\tilde{g}}(\delta x)(U(x)+\delta^{2}v_{q}(x))\right|_{L_{\tilde{g}}^{\frac{2n}{n+2}}(B_{1/\delta}^{n})}\\
 & O(\delta^{3})\left||x|(1+|x|)^{2-n}\right|_{L_{\tilde{g}}^{\frac{2n}{n+2}}(B_{1/\delta}^{n})}=\left\{ \begin{array}{cc}
O(\delta^{3}\log\delta) & \text{ if }n=8\\
O(\delta^{3}) & \text{ if }n>8
\end{array}\right.
\end{align*}
since $\left|(1+|x|)^{3-n}\right|_{L_{\tilde{g}}^{\frac{2n}{n+2}}(B_{1/\delta}^{n})}$
is bounded when $n>8$ and $\left|(1+|x|)^{3-n}\right|_{L_{\tilde{g}}^{\frac{2n}{n+2}}(B_{1/\delta}^{n})}=O(\log\delta)$
when $n=8$.

For the Laplacian term, in local charts we have
\begin{align*}
\Delta_{\tilde{g}_{q}} & =\Delta_{\text{euc}}+[\tilde{g}_{q}^{ij}(y)-\delta_{ij}]\partial_{ij}^{2}\\
 & +\left[\partial_{i}\tilde{g}_{q}^{ij}(y)+\frac{\tilde{g}_{q}^{ij}(y)\partial_{i}|\tilde{g}_{q}|^{\frac{1}{2}}(y)}{|\tilde{g}_{q}|^{\frac{1}{2}}(y)}\right]\partial_{j}+\frac{\partial_{n}|\tilde{g}_{q}|^{\frac{1}{2}}(y)}{|\tilde{g}_{q}|^{\frac{1}{2}}(y)}\partial_{n}.
\end{align*}
Thus by the expansion of the metric given in (\ref{eq:|g|}), (\ref{eq:gij})
and since $v_{q}$ solves (\ref{eq:vqdef}) and $\Delta_{\text{euc}}U=0$,
we have that
\begin{multline}
\left|\Delta_{\tilde{g}}(W_{\delta,q}+\delta^{2}V_{\delta,q})\right|_{L_{\tilde{g}}^{\frac{2n}{n+2}}(M)}=O(1)\left|\Delta_{\tilde{g}}(U+\delta^{2}v_{q})\right|_{L^{\frac{2n}{n+2}}(B_{1/\delta}^{n})}\\
=O(1)\Biggl|\Delta U+[\tilde{g}_{q}^{ij}(\delta x)-\delta_{ij}]\partial_{ij}^{2}U+\delta^{2}\Delta_{\text{euc}}v_{q}+\delta^{2}[\tilde{g}_{q}^{ij}(\delta x)-\delta_{ij}]\partial_{ij}^{2}v_{q}\\
+\left[\partial_{i}\tilde{g}_{q}^{ij}(\delta x)+\frac{\tilde{g}_{q}^{ij}(\delta x)\partial_{i}|\tilde{g}_{q}|^{\frac{1}{2}}(\delta x)}{|\tilde{g}_{q}|^{\frac{1}{2}}(\delta x)}\right]\partial_{j}(U+\delta^{2}v_{q})+\frac{\partial_{n}|\tilde{g}_{q}|^{\frac{1}{2}}(\delta x)}{|\tilde{g}_{q}|^{\frac{1}{2}}(\delta x)}\partial_{n}(U+\delta^{2}v_{q})\Biggr|_{L^{\frac{2n}{n+2}}(B_{1/\delta}^{n})}\\
=O(1)\left|\delta^{3}|x|^{3}\partial_{ij}^{2}U+\delta^{4}|x|^{2}\partial_{ij}^{2}v_{q}+\delta^{3}|x|^{2}\partial_{j}(U+\delta^{2}v_{q})+\delta^{3}|x|^{2}\partial_{n}(U+\delta^{2}v_{q})\right|_{L^{\frac{2n}{n+2}}(B_{1/\delta}^{n})}\\
=O(1)\left|\delta^{3}(1+|x|)^{3-n}+\delta^{4}(1+|x|)^{4-n}\right|_{L^{\frac{2n}{n+2}}(B_{1/\delta}^{n})}\\
=O(\delta^{3})\left|(1+|x|)^{3-n}\right|_{L^{\frac{2n}{n+2}}(B_{1/\delta}^{n})}=\left\{ \begin{array}{cc}
O(\delta^{3}\log\delta) & \text{ if }n=8\\
O(\delta^{3}) & \text{ if }n>8
\end{array}\right.,\label{eq:Delta-resto}
\end{multline}
and we conclude that 
\[
I_{1}=\left\{ \begin{array}{cc}
O(\delta^{3}\log\delta)\|R\|_{g} & \text{ if }n=8\\
O(\delta^{3})\|R\|_{g} & \text{ if }n>8
\end{array}\right..
\]
For the last integral $I_{3}$ we have
\begin{align}
I_{3}\le & C\left|(n-2)\left((W_{\delta,q}+\delta^{2}V_{\delta,q})^{+}\right)^{\frac{n}{n-2}}-\frac{\partial}{\partial\nu}(W_{\delta,q}+\delta^{2}V_{\delta,q})\right|_{L_{\tilde{g}}^{\frac{2(n-1)}{n}}(\partial M)}\left\Vert R\right\Vert _{g}\nonumber \\
\le & C(n-2)\left|\left((W_{\delta,q}+\delta^{2}V_{\delta,q})^{+}\right)^{\frac{n}{n-2}}-\left(W_{\delta,q}\right)^{\frac{n}{n-2}}-\delta^{2}\frac{\partial}{\partial\nu}V_{\delta,q}\right|_{L_{\tilde{g}}^{\frac{2(n-1)}{n}}(\partial M)}\left\Vert R\right\Vert _{g}\nonumber \\
 & +C\left|(n-2)\left(W_{\delta,q}\right)^{\frac{n}{n-2}}-\frac{\partial}{\partial\nu}W_{\delta,q}\right|_{L_{\tilde{g}}^{\frac{2(n-1)}{n}}(\partial M)}\left\Vert R\right\Vert _{g}.\label{eq:bordo1}
\end{align}
Since $U$ is a solution of (\ref{eq:Udelta}) one can easily obtain
\begin{equation}
\left|(n-2)\left(W_{\delta,q}\right)^{\frac{n}{n-2}}-\frac{\partial}{\partial\nu}W_{\delta,q}\right|_{L_{\tilde{g}}^{\frac{2(n-1)}{n}}(\partial M)}=O(\delta^{3}).\label{eq:bordo2}
\end{equation}
Finally we have, using  (\ref{eq:vqdef}), and expanding $\left((U+\delta^{2}v_{\delta,q})^{+}\right)^{\frac{n}{n-2}}$
near $U$,
\begin{multline}
\left|\left((W_{\delta,q}+\delta^{2}V_{\delta,q})^{+}\right)^{\frac{n}{n-2}}-\left(W_{\delta,q}\right)^{\frac{n}{n-2}}-\delta^{2}\frac{\partial}{\partial\nu}V_{\delta,q}\right|_{L_{\tilde{g}}^{\frac{2(n-1)}{n}}(\partial M)}\\
=O(1)\left|\left((U+\delta^{2}v_{\delta,q})^{+}\right)^{\frac{n}{n-2}}-U^{\frac{n}{n-2}}-\delta^{2}\frac{\partial}{\partial\nu}v_{q}\right|_{L^{\frac{2(n-1)}{n}}(B_{1/\delta}^{n-1})}\\
=O(\delta^{2})\left|\left((U+\theta\delta^{2}v_{\delta,q})^{+}\right)^{\frac{2}{n-2}}v_{q}-U^{\frac{n}{n-2}}v_{q}\right|_{L^{\frac{2(n-1)}{n}}(B_{1/\delta}^{n-1})}.\label{eq:bordo3}
\end{multline}
By the decay estimates (\ref{eq:gradvq}) we have that $U+\theta\delta v_{q}>0$
in $B_{1/\delta}^{n-1}$ provided $\delta$ small enough. So, expanding
again we have 
\begin{multline}
O(\delta^{2})\left|\left((U+\theta\delta^{2}v_{\delta,q})^{+}\right)^{\frac{2}{n-2}}v_{q}-U^{\frac{n}{n-2}}v_{q}\right|_{L^{\frac{2(n-1)}{n}}(B_{1/\delta}^{n-1})}\\
=O(\delta^{3})\left|\delta\left((U+\theta_{1}\delta^{2}v_{\delta,q})^{+}\right)^{\frac{4-n}{n-2}}v_{q}^{2}\right|_{L^{\frac{2(n-1)}{n}}(B_{1/\delta}^{n-1})}\\
=O(\delta^{3})\left|\delta\left(1+|y|\right)^{4-n}\right|_{L^{\frac{2(n-1)}{n}}(B_{1/\delta}^{n-1})}\\
=O(\delta^{3})\left|\left(1+|y|\right)^{3-n}\right|_{L^{\frac{2(n-1)}{n}}(B_{1/\delta}^{n-1})}=O(\delta^{3})\label{eq:bordo4}
\end{multline}
since $n\ge8$ and we get $I_{3}=O(\delta^{3})\left\Vert R\right\Vert _{g}$
and, consequently,

\[
\left\Vert i_{g}^{*}\left(f_{0}(\tilde{W}_{\delta,q}+\delta^{2}\tilde{V}_{\delta,q}\right)-\tilde{W}_{\delta,q}-\delta^{2}\tilde{V}_{\delta,q}\right\Vert _{g}^{2}=O(\delta^{3})\|R\|_{g}.
\]
To conclude the first part of the proof we estimate the term 
\[
\left\Vert i_{g}^{*}\left(f_{\varepsilon}(\tilde{W}_{\delta,q}+\delta^{2}\tilde{V}_{\delta,q})\right)-i_{g}^{*}\left(f_{0}(\tilde{W}_{\delta,q}+\delta^{2}\tilde{V}_{\delta,q})\right)\right\Vert _{g}
\]
It is useful to recall the following Taylor expansions with respect
to $\varepsilon$
\begin{align}
U^{\varepsilon} & =1+\varepsilon\ln U+\frac{1}{2}\varepsilon^{2}\ln^{2}U+o(\varepsilon^{2})\label{eq:Uallaeps}\\
\delta^{-\varepsilon\frac{n-2}{2}} & =1-\varepsilon\frac{n-2}{2}\ln\delta+\varepsilon^{2}\frac{(n-2)^{2}}{8}\ln^{2}\delta+o(\varepsilon^{2}\ln^{2}\delta)\label{eq:deltaallaeps}
\end{align}
We have that, recalling that $\Lambda_{q}(0)=0$,
\begin{multline}
\left\Vert i_{g}^{*}\left(f_{\varepsilon}(\tilde{W}_{\delta,q}+\delta^{2}\tilde{V}_{\delta,q}\right)-i_{g}^{*}\left(f_{0}(\tilde{W}_{\delta,q}+\delta^{2}\tilde{V}_{\delta,q}\right)\right\Vert _{g}\\
=\left\Vert i_{\tilde{g}}^{*}\left(\tilde{f}_{\varepsilon}(W_{\delta,q}+\delta^{2}V_{\delta,q}\right)-i_{\tilde{g}}^{*}\left(f_{0}(W_{\delta,q}+\delta^{2}V_{\delta,q}\right)\right\Vert _{\tilde{g}}\\
\le C\left|\Lambda_{q}^{\varepsilon}\left(W_{\delta,q}+\delta^{2}V_{\delta,q}\right)^{\frac{n}{n-2}+\varepsilon}-\left(W_{\delta,q}+\delta^{2}V_{\delta,q}\right)^{\frac{n}{n-2}}\right|_{L_{\tilde{g}}^{\frac{2(n-1)}{n}}(\partial M)}\\
=O(1)\left|\Lambda_{q}^{\varepsilon}(\delta y)\left(U+\delta^{2}v_{q}\right)^{\frac{n}{n-2}+\varepsilon}-\left(U+\delta^{2}v_{q}\right)^{\frac{n}{n-2}}\right|_{L^{\frac{2(n-1)}{n}}(B_{1/\delta}^{n-1})}\\
=O(1)\left|\left(\frac{\Lambda_{q}^{\varepsilon}(\delta y)}{\delta^{\varepsilon\frac{n-2}{2}}}(U+\delta^{2}v_{q})^{\varepsilon}-1\right)(U+\delta^{2}v_{q})^{\frac{n}{n-2}}\right|_{L^{\frac{2(n-1)}{n}}(B_{1/\delta}^{n-1})}\\
\le O(1)\left|\left(-\frac{n-2}{2}\varepsilon\ln\delta+\varepsilon\ln(U+\delta^{2}v_{q})+O(\varepsilon^{2}\ln\delta)\right)U^{\frac{n}{n-2}}\right|_{L^{\frac{2(n-1)}{n}}(B_{1/\delta}^{n-1})}\\
=O(\varepsilon\log\delta)+O(\varepsilon),\label{eq:feps-f0}
\end{multline}
and we have proved (\ref{eq:Rg}).

\textbf{Step 2.} It holds
\begin{equation}
|R_{\varepsilon,\delta,q}|_{L^{s_{\varepsilon}}(\partial M)}=\left\{ \begin{array}{cc}
\delta^{-O^{+}(\varepsilon)}\left\{ O\left(\delta^{3}\log\delta\right)+O(\varepsilon\log\delta)+O(\varepsilon)\right\}  & \text{ if }n=8\\
O\left(\delta^{3}\right)+\delta^{-O^{+}(\varepsilon)}\left\{ O(\varepsilon\log\delta)+O(\varepsilon)\right\}  & \text{\text{ if }}n>8
\end{array}\right..\label{eq:Rseps}
\end{equation}
As in the previous case we consider 
\begin{align*}
|R|_{L_{g}^{s_{\varepsilon}}(\partial M)}\le & \left|i_{g}^{*}\left(f_{\varepsilon}(\tilde{W}_{\delta,q}+\delta^{2}\tilde{V}_{\delta,q})\right)-i_{g}^{*}\left(f_{0}(\tilde{W}_{\delta,q}+\delta^{2}\tilde{V}_{\delta,q})\right)\right|_{L_{g}^{s_{\varepsilon}}(\partial M)}\\
 & +\left|i_{g}^{*}\left(f_{0}(\tilde{W}_{\delta,q}+\delta^{2}\tilde{V}_{\delta,q})\right)-\tilde{W}_{\delta,q}(x)-\delta^{2}\tilde{V}_{\delta,q}\right|_{L_{g}^{s_{\varepsilon}}(\partial M)}
\end{align*}
and we start estimating the second term. Taking again $\Gamma=i_{g}^{*}\left(f_{0}(W_{\delta,q}+\delta V_{\delta,q}\right)$
the solution of (\ref{eq:gamma}), the function $\Gamma-W_{\delta,q}-\delta V_{\delta,q}$
solves the problem
\[
\left\{ \begin{array}{ll}
-\Delta_{g}(\Gamma-\tilde{W}_{\delta,q}-\delta^{2}\tilde{V}_{\delta,q})+a(x)(\Gamma-\tilde{W}_{\delta,q}-\delta^{2}\tilde{V}_{\delta,q})\\
=-\Delta_{g}(\tilde{W}_{\delta,q}+\delta^{2}\tilde{V}_{\delta,q})+a(x)(\tilde{W}_{\delta,q}+\delta^{2}\tilde{V}_{\delta,q}) & \text{ on }M\\
\\
\frac{\partial}{\partial\nu}(\Gamma-\tilde{W}_{\delta,q}-\delta^{2}\tilde{V}_{\delta,q})=f_{0}(\tilde{W}_{\delta,q}+\delta^{2}\tilde{V}_{\delta,q})-\frac{\partial}{\partial\nu}(\tilde{W}_{\delta,q}+\delta^{2}\tilde{V}_{\delta,q}) & \text{ on }\partial M
\end{array}\right..
\]
We choose $q=\frac{2n+n^{2}\left(\frac{n-2}{n-1}\right)\varepsilon}{n+2+2n\left(\frac{n-2}{n-1}\right)\varepsilon}$
and $r=\varepsilon$, so, by Remark \ref{rem:Nit}, we get 
\begin{align*}
|\Gamma-\tilde{W}_{\delta,q}-\delta^{2}\tilde{V}_{\delta,q}|_{L_{g}^{s_{\varepsilon}}(\partial M)}\le & |-\Delta_{g}(\tilde{W}_{\delta,q}+\delta^{2}\tilde{V}_{\delta,q})+a(x)(\tilde{W}_{\delta,q}+\delta^{2}\tilde{V}_{\delta,q})|_{L_{g}^{q+\varepsilon}(M)}\\
 & +\left|f_{0}(\tilde{W}_{\delta,q}+\delta^{2}\tilde{V}_{\delta,q})-\frac{\partial}{\partial\nu}(\tilde{W}_{\delta,q}+\delta^{2}\tilde{V}_{\delta,q})\right|_{L_{g}^{\frac{(n-1)q}{n-q}+\varepsilon}(\partial M)}.
\end{align*}
We remark that with our choice we can write $q=\frac{2n}{n+2}+O^{+}(\varepsilon)$,
$\frac{1}{q+\varepsilon}=\frac{n+2}{2n}-O^{+}(\varepsilon)$ and $\frac{(n-1)q}{n-q}+\varepsilon=\frac{2(n-1)}{n}+O^{+}(\varepsilon)$
where $0<O^{+}(\varepsilon)<C\varepsilon$ for some positive constant
$C$. 

We proceed as in the first part, obtaining
\[
|a(x)(\tilde{W}_{\delta,q}+\delta^{2}\tilde{V}_{\delta,q})|_{L_{g}^{q+\varepsilon}(M)}=O\left(\delta^{3-O^{+}(\varepsilon)}\right)\left|(1+|x|)^{3-n}\right|_{L_{\tilde{g}}^{\frac{2n}{n+2}+O^{+}(\varepsilon)}(B_{1/\delta}^{n})}.
\]
At this point we have that, if $n>8$, $\left|(1+|x|)^{3-n}\right|_{L_{\tilde{g}}^{\frac{2n}{n+2}+O^{+}(\varepsilon)}(B_{1/\delta}^{n})}=O(1)$,
while, if $n=8$, $\left|(1+|x|)^{3-n}\right|_{L_{\tilde{g}}^{\frac{2n}{n+2}+O^{+}(\varepsilon)}(B_{1/\delta}^{n})}=O\left(\delta^{-O^{+}(\varepsilon)}\log\delta\right)$.
So we get 
\[
|a(x)(\tilde{W}_{\delta,q}+\delta^{2}\tilde{V}_{\delta,q})|_{L_{g}^{q+\varepsilon}(M)}=\left\{ \begin{array}{cc}
O\left(\delta^{3-O^{+}(\varepsilon)}\log\delta\right) & \text{ if }n=8\\
O\left(\delta^{3}\right) & \text{\text{ if }}n>8
\end{array}\right..
\]
In the same spirit one can check that
\begin{align*}
|-\Delta_{g}(\tilde{W}_{\delta,q}+\delta^{2}\tilde{V}_{\delta,q})|_{L_{g}^{q+\varepsilon}(M)} & =\left\{ \begin{array}{cc}
O\left(\delta^{3-O^{+}(\varepsilon)}\log\delta\right) & \text{ if }n=8\\
O\left(\delta^{3}\right) & \text{\text{ if }}n>8
\end{array}\right.;\\
\left|f_{0}(\tilde{W}_{\delta,q}+\delta^{2}\tilde{V}_{\delta,q})-\frac{\partial}{\partial\nu}(\tilde{W}_{\delta,q}+\delta^{2}\tilde{V}_{\delta,q})\right|_{L_{g}^{\frac{(n-1)q}{n-q}+\varepsilon}(\partial M)} & =\left\{ \begin{array}{cc}
O\left(\delta^{3-O^{+}(\varepsilon)}\log\delta\right) & \text{ if }n=8\\
O\left(\delta^{3}\right) & \text{\text{ if }}n>8
\end{array}\right..
\end{align*}
To finish the proof we estimate 
\[
\left|i_{g}^{*}\left(f_{\varepsilon}(\tilde{W}_{\delta,q}+\delta^{2}\tilde{V}_{\delta,q})\right)-i_{g}^{*}\left(f_{0}(\tilde{W}_{\delta,q}+\delta^{2}\tilde{V}_{\delta,q})\right)\right|_{L_{g}^{s_{\varepsilon}}(\partial M)}
\]
Again, by Remark \ref{rem:Nit}, we have
\begin{multline*}
\left|i_{g}^{*}\left(f_{\varepsilon}(\tilde{W}_{\delta,q}+\delta^{2}\tilde{V}_{\delta,q})\right)-i_{g}^{*}\left(f_{0}(\tilde{W}_{\delta,q}+\delta^{2}\tilde{V}_{\delta,q})\right)\right|_{L_{g}^{s_{\varepsilon}}(\partial M)}\\
\le\left|f_{\varepsilon}(\tilde{W}_{\delta,q}+\delta^{2}\tilde{V}_{\delta,q})-f_{0}(\tilde{W}_{\delta,q}+\delta^{2}\tilde{V}_{\delta,q})\right|_{L_{g}^{\frac{2(n-1)}{n}+O^{+}(\varepsilon)}(\partial M)}
\end{multline*}
and, proceeding as in (\ref{eq:feps-f0}) we obtain
\begin{multline*}
\left|f_{\varepsilon}(\tilde{W}_{\delta,q}+\delta^{2}\tilde{V}_{\delta,q})-f_{0}(\tilde{W}_{\delta,q}+\delta^{2}\tilde{V}_{\delta,q})\right|_{L_{g}^{\frac{2(n-1)}{n}+O^{+}(\varepsilon)}(\partial M)}\\
=\delta^{-O^{+}(\varepsilon)}\left\{ O(\varepsilon\left|\ln\delta\right|)+O(\varepsilon)\right\} ,
\end{multline*}
and we have proved (\ref{eq:Rseps}). 

The last claim follows by direct computation.
\end{proof}
\begin{prop}
\label{prop:phi}Let $\delta=\lambda\varepsilon^{\frac{1}{4}}$ For
$a,b\in\mathbb{R}$, $0<a<b$ there exists a positive constant $C=C(a,b)$
such that, for $\varepsilon$ small, for any $q\in\partial M$ and
for any $\lambda\in[a,b]$ there exists a unique $\phi_{\delta,q}$
which solves (\ref{eq:P-K}) with
\[
\|\phi_{\delta,q}\|_{\mathcal{H}_{g}}=\left\{ \begin{array}{cc}
O\left(\varepsilon^{\frac{3}{4}}\log\varepsilon\right) & \text{ if }n=8\\
O\left(\varepsilon^{\frac{3}{4}}\right) & \text{\text{ if }}n>8
\end{array}\right..
\]
Moreover the map $q\mapsto\phi_{\delta,q}$ is a $C^{1}(\partial M,\mathcal{H}_{g})$
map.
\end{prop}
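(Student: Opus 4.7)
The plan is to apply the Banach fixed-point theorem in $\tilde{K}_{\delta,q}^{\perp}\cap\mathcal{H}_g$ to the equivalent form $L(\tilde{\phi})=N(\tilde{\phi})+R$ of the projected problem (\ref{eq:P-Kort}). Lemma \ref{lem:L} furnishes the invertibility bound $\|L^{-1}\|\le C_0^{-1}$ uniformly in $q\in\partial M$ and $\lambda\in[a,b]$, so the equation is equivalent to the fixed-point problem
\[
\tilde{\phi}=T(\tilde{\phi}):=L^{-1}\bigl(N(\tilde{\phi})+R\bigr),
\]
and Lemma \ref{lem:R} already supplies the $\|\cdot\|_{\mathcal{H}_g}$-size of $R$ that dictates the radius of the ball on which $T$ is iterated.

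The next step is to establish the two standard nonlinear estimates on $N$. Set $p_\varepsilon:=\tfrac{n}{n-2}+\varepsilon$, which for $n\ge8$ and $\varepsilon$ small satisfies $1<p_\varepsilon<2$. From the pointwise inequality $|f_\varepsilon(a+b)-f_\varepsilon(a)-f'_\varepsilon(a)b|\le C|b|^{p_\varepsilon}$ (with the symmetric correction in the regime $|b|>a$), combined with the mapping property of $i_g^*$ from (\ref{eq:nittka}) in Remark \ref{rem:Nit} applied to the exponents fixed in (\ref{eq:nittka1}) and the analogous $H^1$ bound on $i_g^*$, I would derive
\begin{align*}
\|N(\tilde{\phi})\|_{\mathcal{H}_g}&\le C\,\|\tilde{\phi}\|_{\mathcal{H}_g}^{p_\varepsilon},\\
\|N(\tilde{\phi}_1)-N(\tilde{\phi}_2)\|_{\mathcal{H}_g}&\le C\,\bigl(\|\tilde{\phi}_1\|_{\mathcal{H}_g}+\|\tilde{\phi}_2\|_{\mathcal{H}_g}\bigr)^{p_\varepsilon-1}\|\tilde{\phi}_1-\tilde{\phi}_2\|_{\mathcal{H}_g}.
\end{align*}
Setting $\rho:=2C_0^{-1}\|R\|_{\mathcal{H}_g}=O(\varepsilon^{3/4}|\log\varepsilon|)$, both $C\rho^{p_\varepsilon-1}<\tfrac{1}{2}$ and $C_0^{-1}\bigl(C\rho^{p_\varepsilon}+\|R\|_{\mathcal{H}_g}\bigr)\le\rho$ hold for $\varepsilon$ small, since $p_\varepsilon>1$. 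Hence $T$ is a strict contraction on the closed $\rho$-ball of $\tilde{K}_{\delta,q}^{\perp}\cap\mathcal{H}_g$, and its unique fixed point $\tilde{\phi}_{\delta,q}$ satisfies the norm estimate claimed.

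For the $C^1$-dependence on $q$ I would apply the implicit function theorem to $F(q,\tilde{\phi}):=\tilde{\phi}-T_q(\tilde{\phi})$: the data $\Lambda_q$, $\psi_q^\partial$ and $v_q$ all depend smoothly on $q$ (the last by Remark \ref{rem:vq}), so $F$ is $C^1$ jointly, and $D_{\tilde{\phi}}F=\mathrm{Id}-L_q^{-1}D_{\tilde{\phi}}N_q$ is invertible at the fixed point thanks to the contraction estimate. The main technical obstacle is controlling both components of $\|\cdot\|_{\mathcal{H}_g}=\|\cdot\|_g+|\cdot|_{L^{s_\varepsilon}(\partial M)}$ simultaneously: the Hilbert part is handled through duality of $i_g^*$, whereas the Lebesgue part relies on Remark \ref{rem:Nit} and introduces the multiplicative loss $\delta^{-O^+(\varepsilon)}$. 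That loss is already absorbed inside Lemma \ref{lem:R} by the gauge $\delta=\lambda\varepsilon^{1/4}$, and the same mechanism ensures the nonlinear estimates above survive unchanged in the $L^{s_\varepsilon}$ component, so the same ball radius $\rho$ governs both.
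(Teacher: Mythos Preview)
Your proposal is correct and follows essentially the same route as the paper: the contraction-mapping argument on $T=L^{-1}(N+R)$, invoking Lemma~\ref{lem:L} for the uniform linear bound, Lemma~\ref{lem:R} for the size of $R$, and the implicit function theorem for the $C^1$ dependence on $q$. The only minor technical difference is that the paper obtains the contraction property of $N$ via the mean value theorem and continuity of $f'_\varepsilon$ (writing $\|N(\phi_1)-N(\phi_2)\|_{\mathcal{H}}\le\gamma\|\phi_1-\phi_2\|_{\mathcal{H}}$ with $\gamma<1$ for $\|\phi_i\|_{\mathcal{H}}$ small), whereas you exploit the explicit sub-quadratic pointwise inequality $|f_\varepsilon(a+b)-f_\varepsilon(a)-f'_\varepsilon(a)b|\le C|b|^{p_\varepsilon}$ valid for $1<p_\varepsilon<2$; both routes land on the same ball of radius $O(\varepsilon^{3/4})$ (resp.\ $O(\varepsilon^{3/4}|\log\varepsilon|)$ when $n=8$).
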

\begin{proof}
First we prove that the nonlinear operator $N$ defined (\ref{eq:defN-1})
is a contraction on a suitable ball of $\mathcal{H}$. Recalling that
\[
\|N(\phi_{1})-N(\phi_{2})\|_{\mathcal{H}}=\|N(\phi_{1})-N(\phi_{2})\|_{H}+|N(\phi_{1})-N(\phi_{2})|_{L^{s_{\varepsilon}}(\partial M)}
\]
we estimate the two right hand side terms separately.

By the continuity of $i^{*}:L^{\frac{2(n-1)}{n}}(\partial M)\rightarrow H$,
and by Lagrange theorem we have
\begin{multline*}
\|N(\phi_{1})-N(\phi_{2})\|_{H}\\
\le\left\Vert \left(f'_{\varepsilon}\left(W_{\delta,q}+\theta\phi_{1}+(1-\theta)\phi_{2}+\delta V_{\delta,q}\right)-f'_{\varepsilon}(W_{\delta,q}+\delta V_{\delta,q})\right)[\phi_{1}-\phi_{2}]\right\Vert _{L^{\frac{2(n-1)}{n}}(\partial M)}
\end{multline*}
and, since $|\phi_{1}-\phi_{2}|^{\frac{2(n-1)}{n}}\in L^{\frac{n}{n-2}}(\partial M)$
and $|f_{\varepsilon}'(\cdot)|^{\frac{2(n-1)}{n}}\in L^{\frac{n}{2}}(\partial M)$,
we have 
\begin{multline*}
\|N(\phi_{1})-N(\phi_{2})\|_{H}\\
\le\left\Vert \left(f'_{\varepsilon}\left(W_{\delta,q}+\theta\phi_{1}+(1-\theta)\phi_{2}+\delta V_{\delta,q}\right)-f'_{\varepsilon}(W_{\delta,q})+\delta V_{\delta,q}\right)\right\Vert _{L^{\frac{2(n-1)}{2}}(\partial M)}\|\phi_{1}-\phi_{2}\|_{H}\\
=\gamma\|\phi_{1}-\phi_{2}\|_{H}
\end{multline*}
where we can choose
\[
\gamma:=\left\Vert \left(f_{\varepsilon}'\left(W_{\delta,q}+\theta\phi_{1}+(1-\theta)\phi_{2}+\delta V_{\delta,q}\right)-f_{\varepsilon}'(W_{\delta,q}+\delta V_{\delta,q})\right)\right\Vert _{L^{\frac{2(n-1)}{n-2}}(\partial M)}<1,
\]
provided $\|\phi_{1}\|_{H}$ and $\|\phi_{2}\|_{H}$ sufficiently
small. 

For the second term we argue in a similar way and, recalling that,
by (\ref{eq:nittka}), $|i^{*}(g)|_{L^{s_{\varepsilon}}(\partial M)}\le|g|_{L^{\frac{2(n-1)+n(n-2)\varepsilon}{n+(n-2)\varepsilon}}(\partial M)}$,
we have
\begin{multline*}
|N(\phi_{1})-N(\phi_{2})|_{L^{s_{\varepsilon}}(\partial M)}\\
\le\left|\left(f'_{\varepsilon}\left(W_{\delta,q}+\theta\phi_{1}+(1-\theta)\phi_{2}+\delta V_{\delta,q}\right)-f'_{\varepsilon}(W_{\delta,q}+\delta V_{\delta,q})\right)[\phi_{1}-\phi_{2}]\right|_{L^{\frac{2(n-1)+n(n-2)\varepsilon}{n+(n-2)\varepsilon}}(\partial M)}
\end{multline*}
Since $\phi_{1},\phi_{2},W_{\delta,q}V_{\delta,q}\in L^{s_{\varepsilon}}$
we have that $|\phi_{1}-\phi_{2}|^{\frac{2(n-1)+n(n-2)\varepsilon}{n+(n-2)\varepsilon}}\in L^{\frac{n+(n-2)\varepsilon}{n-2}}(\partial M)$
and $|f'(\cdot)|^{\frac{2(n-1)+n(n-2)\varepsilon}{n+(n-2)\varepsilon}}\in L^{\frac{n+(n-2)\varepsilon}{2+(n-2)\varepsilon}}(\partial M)$.
So we conclude as above that we can choose $\left|\phi_{1}\right|_{L^{s_{\varepsilon}}(\partial M)}$,
$\left|\phi_{2}\right|_{L^{s_{\varepsilon}}(\partial M)}$ sufficiently
small in order to get
\[
|N(\phi_{1})-N(\phi_{2})|_{L^{s_{\varepsilon}}(\partial M)}\le\gamma\left|\phi_{1}-\phi_{2}\right|_{L^{s_{\varepsilon}}(\partial M)}.
\]
So 
\[
\|N(\phi_{1})-N(\phi_{2})\|_{\mathcal{H}}\le\gamma\|\phi_{1}-\phi_{2}\|_{\mathcal{H}}
\]
with $\gamma<1$, provided $\|\phi_{1}\|_{\mathcal{H}}$, $\|\phi_{2}\|_{\mathcal{H}}$
small enough.

With the same strategy it is possible to prove that if $\|\phi\|_{\mathcal{H}}$
is sufficiently small there exists $\bar{\gamma}<1$ such that $\|N(\phi)\|_{\mathcal{H}}\le\bar{\gamma}\|\phi\|_{\mathcal{H}}$. 

At this point, recalling Lemma \ref{lem:L} and Lemma \ref{lem:R},
it is not difficult to prove that there exists a constant $C>0$ such
that, if $n>8$ and $\|\phi\|_{\mathcal{H}}\le C\varepsilon^{\frac{3}{4}}$
then the map
\[
T(\phi):=L^{-1}(N(\phi)+R)
\]
is a contraction from the ball $\|\phi\|_{\mathcal{H}}\le C\varepsilon^{\frac{3}{4}}$
in itself. We proceed analogously for $n=8$ and we get the first
claim by the Contraction Mapping Theorem. The regularity claim can
be proven via the Implicit Function Theorem.
\end{proof}

\section{The reduced problem}

For any choice of $(\delta,q)$ Proposition \ref{prop:phi} states
the we can solve the infinite dimensional problem (\ref{eq:P-Kort}).
Now, set $\delta=\lambda\varepsilon^{\frac{1}{4}}$, we look for a
critical point for the functional $J_{\varepsilon,g}$ having the
form $\tilde{W}_{\lambda\varepsilon^{\frac{1}{4}},q}+\lambda^{2}\varepsilon^{\frac{1}{2}}\tilde{V}_{\lambda\varepsilon^{\frac{1}{4}},q}+\tilde{\phi}_{\lambda\varepsilon^{\frac{1}{4}},q}$.We
define the function
\begin{align*}
I_{\varepsilon}(\lambda,q): & =J_{\varepsilon,g}\left(\tilde{W}_{\lambda\varepsilon^{\frac{1}{4}},q}+\lambda^{2}\varepsilon^{\frac{1}{2}}\tilde{V}_{\lambda\varepsilon^{\frac{1}{4}},q}+\tilde{\phi}_{\lambda\varepsilon^{\frac{1}{4}},q}\right)\\
I_{\varepsilon}: & [a,b]\times\partial M\rightarrow\mathbb{R}
\end{align*}
 Which will useful in the next
\begin{lem}
\label{lem:JWpiuPhi}Assume $n\ge8$ and $\delta=\lambda\varepsilon^{\frac{1}{4}}$.
It holds 
\begin{multline*}
\left|I_{\varepsilon}(\lambda,q)-J_{\varepsilon,g}\left(\tilde{W}_{\lambda\varepsilon^{\frac{1}{4}},q}+\lambda^{2}\varepsilon^{\frac{1}{2}}\tilde{V}_{\lambda\varepsilon^{\frac{1}{4}},q}\right)\right|\\
\le\left\Vert \tilde{\phi}_{\lambda\varepsilon^{\frac{1}{4}},q}\right\Vert _{\mathcal{H}_{g}}^{2}+C\left(\varepsilon\left|\log\varepsilon\right|+\varepsilon^{\frac{1}{2}}\right)\left\Vert \tilde{\phi}_{\lambda\varepsilon^{\frac{1}{4}},q}\right\Vert _{\mathcal{H}_{g}}=o(\varepsilon)
\end{multline*}
$C^{0}$-uniformly for $q\in\partial M$ and $\lambda$ in a compact
set of $(0,+\infty)$.
\end{lem}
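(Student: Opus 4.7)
Abbreviate $u := \tilde{W}_{\delta,q} + \lambda^2\varepsilon^{1/2}\tilde{V}_{\delta,q}$ and $\phi := \tilde{\phi}_{\delta,q}$ with $\delta = \lambda\varepsilon^{1/4}$, and set $p_\varepsilon := \tfrac{2(n-1)}{n-2}+\varepsilon$. The plan is to perform a second-order Taylor expansion of $J_{\varepsilon,g}$ around $u$. A direct computation starting from the definition (\ref{eq:Jeps}) gives
\begin{align*}
J_{\varepsilon,g}(u+\phi) - J_{\varepsilon,g}(u) &= J'_{\varepsilon,g}(u)[\phi] + \tfrac{1}{2}\|\phi\|_g^2 \\
&\quad - \tfrac{(n-2)^2}{2(n-1)+\varepsilon(n-2)}\int_{\partial M}\bigl[(u+\phi)_+^{p_\varepsilon} - u_+^{p_\varepsilon} - p_\varepsilon u_+^{p_\varepsilon - 1}\phi\bigr]d\sigma_g,
\end{align*}
and I would estimate the linear and super-linear contributions separately.

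\textbf{Linear piece.} Using the identity $J'_{\varepsilon,g}(v)[w] = \langle\langle v - i_g^*(f_\varepsilon(v)), w\rangle\rangle_g$ (which follows from the definition of $i_g^*$ together with the algebraic simplification $(n-2)^2 p_\varepsilon/[2(n-1)+\varepsilon(n-2)] = n-2$), and exploiting $\phi \in \tilde{K}^\perp_{\delta,q}$, one has
\[
J'_{\varepsilon,g}(u)[\phi] = \langle\langle \tilde{\Pi}^\perp(u - i_g^*(f_\varepsilon(u))), \phi\rangle\rangle_g = -\langle\langle R, \phi\rangle\rangle_g.
\]
Cauchy-Schwarz and Lemma \ref{lem:R} then yield $|J'_{\varepsilon,g}(u)[\phi]| \le \|R\|_g\|\phi\|_{\mathcal{H}_g} \le C(\varepsilon|\log\varepsilon|+\varepsilon^{1/2})\|\phi\|_{\mathcal{H}_g}$, the stated envelope being a convenient majorant of the sharper $O(\varepsilon^{3/4}|\log\varepsilon|)$ bound.

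\textbf{Nonlinear piece.} Since $p_\varepsilon \ge 2$, one has the pointwise elementary inequality $|(u+\phi)_+^{p_\varepsilon} - u_+^{p_\varepsilon} - p_\varepsilon u_+^{p_\varepsilon-1}\phi| \le C(u_+^{p_\varepsilon-2}\phi^2 + |\phi|^{p_\varepsilon})$. Hölder's inequality on $\partial M$ with conjugate exponents $(s_\varepsilon/(s_\varepsilon-2), s_\varepsilon/2)$, combined with the uniform bound on $|u|_{L^{s_\varepsilon}(\partial M)}$, gives $\int_{\partial M}u_+^{p_\varepsilon-2}\phi^2\,d\sigma_g \le C\|\phi\|^2_{\mathcal{H}_g}$, while $\int_{\partial M}|\phi|^{p_\varepsilon}\,d\sigma_g \le C\|\phi\|^{p_\varepsilon}_{\mathcal{H}_g} = o(\|\phi\|^2_{\mathcal{H}_g})$ since $p_\varepsilon>2$ and $\phi$ is small. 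Together with $\tfrac{1}{2}\|\phi\|_g^2 \le \tfrac{1}{2}\|\phi\|^2_{\mathcal{H}_g}$, this produces the first displayed inequality. The $o(\varepsilon)$ conclusion follows by inserting $\|\phi\|_{\mathcal{H}_g} = O(\varepsilon^{3/4}|\log\varepsilon|)$ from Proposition \ref{prop:phi}: both $\|\phi\|^2_{\mathcal{H}_g}$ and $(\varepsilon|\log\varepsilon|+\varepsilon^{1/2})\|\phi\|_{\mathcal{H}_g}$ are $O(\varepsilon^{5/4}|\log\varepsilon|)$, and uniformity in $(q,\lambda)\in\partial M\times[a,b]$ is inherited from the uniformity of the input estimates.

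\textbf{Main obstacle.} The delicate step is making the Hölder estimate on $\int u_+^{p_\varepsilon-2}\phi^2$ uniform in $\varepsilon$. One needs: (i) the Lebesgue exponent $s_\varepsilon(p_\varepsilon-2)/(s_\varepsilon-2)$ to remain strictly below $s_\varepsilon$, which is immediate since $(p_\varepsilon-2)/(s_\varepsilon-2) = (2/(n-2)+\varepsilon)/(2/(n-2)+n\varepsilon) < 1$; and (ii) $|u|_{L^{s_\varepsilon}(\partial M)}$ to be bounded in $\varepsilon$. The latter reduces, after rescaling, to $|W_{\delta,q}|_{L^{s_\varepsilon}(\partial M)}^{s_\varepsilon} \sim \delta^{-n(n-2)\varepsilon/2}\int_{\mathbb{R}^{n-1}}U^{s_\varepsilon}$, with the prefactor $\delta^{-n(n-2)\varepsilon/2} = \exp(-\tfrac{n(n-2)}{2}\varepsilon\log\delta)\to 1$ because $\varepsilon\log\delta\to 0$ for $\delta=\lambda\varepsilon^{1/4}$; the contribution of $\delta^2\tilde{V}_{\delta,q}$ is lower order thanks to the decay (\ref{eq:gradvq}).
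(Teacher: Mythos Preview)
Your proof is correct and, for the first-order term, cleaner than the paper's. The paper expands $J'_{\varepsilon,\tilde g_q}(W_{\delta,q}+\delta^2 V_{\delta,q})[\phi]$ by hand: it integrates by parts, separates the interior Laplacian, the curvature term $\tilde a$, the mean-curvature boundary term, and the nonlinear boundary pieces, and then bounds each one by reusing the specific estimates (\ref{eq:hg-resto}), (\ref{eq:Delta-resto}), (\ref{eq:bordo1})--(\ref{eq:bordo4}) and (\ref{eq:feps-f0}) already obtained inside the proof of Lemma~\ref{lem:R}. You bypass all of this by observing the variational identity $J'_{\varepsilon,g}(u)[\phi]=\langle\langle u-i_g^*(f_\varepsilon(u)),\phi\rangle\rangle_g=-\langle\langle R,\phi\rangle\rangle_g$ (the projection is harmless since $\phi\in\tilde K_{\delta,q}^\perp$), so Cauchy--Schwarz and the \emph{conclusion} of Lemma~\ref{lem:R} give the bound immediately; in fact you get the sharper coefficient $O(\varepsilon^{3/4}|\log\varepsilon|)$ rather than the $O(\varepsilon^{1/2})$ that the paper's term-by-term argument produces (and that appears in the statement). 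For the second-order piece the two arguments are essentially equivalent: the paper uses the Lagrange form $J''(u+\theta\phi)[\phi,\phi]$ and a single H\"older estimate, while you keep the exact remainder and use the standard pointwise inequality together with H\"older; your check that $|u|_{L^{s_\varepsilon}(\partial M)}$ stays bounded (via $\varepsilon\log\delta\to 0$) is exactly what is needed to make this uniform.
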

The proof of this result is similar to prove of \cite[Lemma 6]{GMP19}
and will be postponed in the Appendix

At this point we can prove the main result of this section.
\begin{prop}
\label{lem:expJeps}Assume $n\ge8$ and $\delta=\lambda\varepsilon^{\frac{1}{4}}$.
It holds 
\[
J_{\varepsilon}(\tilde{W}_{\lambda\varepsilon^{\frac{1}{4}},q}+\lambda^{2}\varepsilon^{\frac{1}{2}}\tilde{V}_{\lambda\varepsilon^{\frac{1}{4}},q})=A+B(\varepsilon)+\varepsilon\lambda^{4}\varphi(q)+C\varepsilon\ln\lambda+o(\varepsilon),
\]
$C^{0}$-uniformly for $q\in\partial M$ and $\lambda$ in a compact
set of $(0,+\infty)$, where

\begin{align*}
A= & \frac{1}{2}\int_{\mathbb{R}_{+}^{n}}|\nabla U(t,z)|^{2}dtdz-\frac{(n-2)^{2}}{2(n-1)}\int_{\mathbb{R}^{n-1}}U(0,z)^{\frac{2(n-1)}{(n-2)}}dz\\
B(\varepsilon)= & \varepsilon\left[\frac{(n-2)^{3}}{2(n-1)}\int_{\mathbb{R}^{n-1}}U^{\frac{2(n-1)}{n-2}}(z,0)dz-\frac{(n-2)^{2}}{2(n-1)}\int_{\mathbb{R}^{n-1}}U^{\frac{2(n-1)}{n-2}}(z,0)\ln U(z,0)dz\right]\\
 & -\varepsilon|\ln\varepsilon|\frac{(n-2)^{3}}{16(n-1)}\int_{\mathbb{R}^{n-1}}U^{\frac{2(n-1)}{n-2}}(z,0)dz\\
\varphi(q)= & \frac{1}{2}\int_{\mathbb{R}_{+}^{n}}v_{q}\Delta v_{q}dtdz-\frac{n-2}{96(n-1)}|\bar{W}(q)|^{2}\int_{\mathbb{R}_{+}^{n}}|z|^{2}U^{2}(t,z)dtdz\\
 & -\frac{(n-2)(n-8)}{2(n^{2}-1)}R_{ninj}^{2}(q)\int_{\mathbb{R}_{+}^{n}}\frac{t^{2}|z|^{4}}{\left((1+t)^{2}+|z|^{2}\right)^{n}}dtdz.\\
C= & \frac{(n-2)^{3}}{4(n-1)}\int_{\mathbb{R}^{n-1}}U^{\frac{2(n-1)}{n-2}}dz>0.
\end{align*}
\end{prop}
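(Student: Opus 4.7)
The plan is to compute $J_{\varepsilon,g}(\tilde W_{\delta,q} + \delta^{2}\tilde V_{\delta,q})$ by exploiting the conformal identity $J_{\varepsilon,g}(\Lambda_{q}u)=J_{\varepsilon,\tilde g}(u)$ recalled in Section 2 to reduce to $J_{\varepsilon,\tilde g}(W_{\delta,q}+\delta^{2}V_{\delta,q})$ in the conformal metric $\tilde g = \tilde g_{q}$. The benefit is that in Fermi coordinates around $q$ the expansions (\ref{eq:|g|}), (\ref{eq:hij}), (\ref{eq:gij}), (\ref{eq:Rii}), (\ref{eq:Ricci}) make the computation essentially Euclidean with polynomial $|y|$-corrections driven by $\bar R_{ikjl}$, $R_{ninj}$ and the Weyl tensor. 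I would then split
\[
J_{\varepsilon,\tilde g}(W+\delta^{2}V)=Q_{1}+Q_{2}-N,
\]
where $Q_{1}=\tfrac{1}{2}\int_{M}|\nabla_{\tilde g}(W+\delta^{2}V)|^{2}d\mu_{\tilde g}$, $Q_{2}$ collects the lower-order $\tilde a u^{2}$ and $h_{\tilde g} u^{2}$ terms, and $N$ is the nonlinear boundary integral. The umbilic assumption (\ref{eq:hij}) plus $R_{\tilde g}(q)=0$ and (\ref{eq:Rii}) ensure $Q_{2}$ contributes only at order $\varepsilon\lambda^{4}$, feeding into $\varphi(q)$ through the Weyl term.

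The core manipulation is the rescaling $y=\delta x$ on the support of the cut-off, which turns $W_{\delta,q}$ into $U$, $V_{\delta,q}$ into $v_{q}$, and rewrites every integral as one over the half-ball $B^{+}_{1/\delta}\subset \mathbb{R}^{n}_{+}$ plus a negligible tail. For $Q_{1}$, the Euclidean part of $|\nabla U|^{2}$ produces the gradient piece of $A$; the $|y|^{2}$ metric corrections in (\ref{eq:gij}) against $\partial_{i}U\,\partial_{j}U$ vanish after invoking the Ricci vanishing (\ref{eq:Ricci}) and the radial symmetry of $U$ in $z$; the surviving $|y|^{4}$ corrections, combined with $\partial_{ii}^{2}\bar R_{\tilde g_{q}}(q)=-\tfrac{1}{6}|\bar W(q)|^{2}$, generate the Weyl-tensor piece and the $R_{ninj}^{2}$ piece of $\varphi(q)$. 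The cross term $\delta^{2}\int \nabla_{\tilde g} W\cdot \nabla_{\tilde g} V$ is integrated by parts using the defining equation (\ref{eq:vqdef}); the interior collapses to $\tfrac{1}{2}\int_{\mathbb{R}^{n}_{+}} v_{q}\Delta v_{q}\,dt\,dz$, while the boundary term is absorbed by the expansion of $N$ below. The factor $\delta^{4}=\lambda^{4}\varepsilon$ arising from the quadratic-in-$y$ metric corrections is precisely what matches the scaling of $\varepsilon\lambda^{4}\varphi(q)$.

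For the nonlinear term $N$, I would use the Taylor expansions (\ref{eq:Uallaeps})--(\ref{eq:deltaallaeps}) of $U^{\varepsilon}$ and $\delta^{-\varepsilon(n-2)/2}$ together with $\Lambda_{q}(q)=1$ and $\nabla \Lambda_{q}(q)=0$. After the change of variable, $N$ becomes
\[
-\frac{(n-2)^{2}}{2(n-1)+\varepsilon(n-2)}\,\delta^{-\varepsilon(n-2)/2}\int_{\mathbb{R}^{n-1}}\bigl(1+\varepsilon\ln U+\cdots\bigr)U^{\frac{2(n-1)}{n-2}}\,dz+\text{(cross and cutoff)}.
\]
Expanding the prefactor in $\varepsilon$, writing $\ln\delta=\ln\lambda-\tfrac{1}{4}|\ln\varepsilon|$, and collecting terms, one reads off the constant $A$, the explicit $B(\varepsilon)$, the coefficient $C\varepsilon\ln\lambda$ coming from $-\tfrac{n-2}{2}\varepsilon\ln\delta$, and the $\varepsilon|\ln\varepsilon|$ term in $B(\varepsilon)$. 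The would-be leading $\delta^{2}$ cross correction $\delta^{2}\int_{\partial\mathbb{R}^{n}_{+}}U^{n/(n-2)}v_{q}\,dz$ vanishes by the orthogonality (\ref{eq:Uvq}), so the $v_{q}$ dependence enters only at order $\delta^{4}=\varepsilon\lambda^{4}$, matching the place it appears in $\varphi(q)$.

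The main obstacle is the first step, namely bookkeeping the $|y|^{4}$ metric expansion inside $Q_{1}$. Many naively leading terms cancel by the symmetries of $U$ and by (\ref{eq:Ricci}); the surviving terms must be reorganized into exactly the two geometric invariants $|\bar W(q)|^{2}$ (via (\ref{eq:Rii})) and $R_{ninj}^{2}(q)$, and the corresponding Euclidean integrals must be shown to match those displayed in the statement of $\varphi(q)$. A closely parallel calculation appears in \cite{GMP19} under $n\ge 11$; the improvement here down to $n\ge 8$ rests on the sharper estimates of Lemma \ref{lem:R} and on the careful choice of $v_{q}$ in Remark \ref{rem:vq}, which via (\ref{eq:Uvq}) and (\ref{new}) is precisely what forces the potentially dangerous $\delta^{2}$ cross terms to vanish and lets the expansion close at the order $\varepsilon$.
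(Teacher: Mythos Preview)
Your proposal is correct and follows essentially the same route as the paper. The only difference is organizational: the paper splits $J_{\varepsilon,g}=J_{0,g}+(\text{$\varepsilon$-dependent boundary correction})$ and invokes \cite[Lemma~8]{GMP19} directly for the expansion $J_{0,g}(\tilde W_{\delta,q}+\delta^{2}\tilde V_{\delta,q})=A+\delta^{4}\varphi(q)+o(\delta^{4})$, whereas you sketch that geometric computation from scratch via your $Q_{1},Q_{2}$ analysis; the treatment of the $\varepsilon$-dependent boundary integral through the Taylor expansions of $U^{\varepsilon}$ and $\delta^{-\varepsilon(n-2)/2}$ and the substitution $\ln\delta=\ln\lambda-\tfrac{1}{4}|\ln\varepsilon|$ is the same in both.
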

\begin{proof}
We write
\begin{multline*}
J_{\varepsilon,g}(\tilde{W}_{\delta,q}+\delta^{2}\tilde{V}_{\delta,q})=J_{0,g}(\tilde{W}_{\delta,q}+\delta^{2}\tilde{V}_{\delta,q})\\
-\frac{(n-2)^{2}}{2(n-1)+\varepsilon(n-2)}\int_{\partial M}\left((\tilde{W}_{\delta,q}+\delta^{2}\tilde{V}_{\delta,q})^{+}\right)^{\frac{2(n-1)}{n-2}+\varepsilon}d\sigma_{g}\\
+\frac{(n-2)^{2}}{2(n-1)}\int_{\partial M}\left((\tilde{W}_{\delta,q}+\delta^{2}\tilde{V}_{\delta,q})^{+}\right)^{\frac{2(n-1)}{n-2}}d\sigma_{g},
\end{multline*}
where 
\begin{align*}
J_{0,g}(v):= & \frac{1}{2}\int_{M}|\nabla_{g}v|^{2}+av^{2}d\mu_{g}+\frac{n-2}{4}\int_{\partial M}h_{g}v^{2}d\sigma_{g}\\
 & -\frac{(n-2)^{2}}{2(n-1)}\int_{\partial M}\left(v^{+}\right)^{\frac{2(n-1)}{n-2}}d\sigma_{g}.
\end{align*}
Since $\frac{(n-2)^{2}}{2(n-1)+\varepsilon(n-2)}=\frac{(n-2)^{2}}{2(n-1)}-\varepsilon\frac{(n-2)^{3}}{2(n-1)}+o(\varepsilon)$
we have
\begin{multline*}
J_{\varepsilon,g}(\tilde{W}_{\delta,q}+\delta^{2}\tilde{V}_{\delta,q})=J_{0,g}(\tilde{W}_{\delta,q}+\delta^{2}\tilde{V}_{\delta,q})\\
-\frac{(n-2)^{2}}{2(n-1)}\int_{\partial M}\left((\tilde{W}_{\delta,q}+\delta^{2}\tilde{V}_{\delta,q})^{+}\right)^{\frac{2(n-1)}{n-2}+\varepsilon}-\left((\tilde{W}_{\delta,q}+\delta^{2}\tilde{V}_{\delta,q})^{+}\right)^{\frac{2(n-1)}{n-2}}d\sigma_{g}\\
+\left[\varepsilon\frac{(n-2)^{3}}{2(n-1)}+o(\varepsilon)\right]\int_{\partial M}\left((\tilde{W}_{\delta,q}+\delta^{2}\tilde{V}_{\delta,q})^{+}\right)^{\frac{2(n-1)}{n-2}+\varepsilon}d\sigma_{g}.
\end{multline*}
For $J_{0,g}$ we proceed as in \cite[Lemma 8]{GMP19} (see also \cite{GM20})
obtaining that 
\[
J_{0,g}(\tilde{W}_{\delta,q}+\delta^{2}\tilde{V}_{\delta,q})=A+\delta^{4}\varphi(q)+o(\delta^{4})
\]
where 
\begin{align*}
A= & \frac{1}{2}\int_{\mathbb{R}_{+}^{n}}|\nabla U(t,z)|^{2}dtdz-\frac{(n-2)^{2}}{2(n-1)}\int_{\mathbb{R}^{n-1}}U(0,z)^{\frac{2(n-1)}{(n-2)}}dz\\
\varphi(q)= & \frac{1}{2}\int_{\mathbb{R}_{+}^{n}}v_{q}\Delta v_{q}dtdz-\frac{n-2}{96(n-1)}|\bar{W}(q)|^{2}\int_{\mathbb{R}_{+}^{n}}|z|^{2}U^{2}(t,z)dtdz\\
 & -\frac{(n-2)(n-8)}{2(n^{2}-1)}R_{ninj}^{2}(q)\int_{\mathbb{R}_{+}^{n}}\frac{t^{2}|z|^{4}}{\left((1+t)^{2}+|z|^{2}\right)^{n}}dtdz.
\end{align*}
Using again (\ref{eq:Uallaeps}) and (\ref{eq:deltaallaeps}), proceeding
similarly to (\ref{eq:feps-f0}), and recalling that $\delta=\lambda\varepsilon^{\frac{1}{4}}$
we have

\begin{multline*}
\int_{\partial M}\left((\tilde{W}_{\delta,q}+\delta^{2}\tilde{V}_{\delta,q})^{+}\right)^{\frac{2(n-1)}{n-2}+\varepsilon}-\left((\tilde{W}_{\delta,q}+\delta^{2}\tilde{V}_{\delta,q})^{+}\right)^{\frac{2(n-1)}{n-2}}d\sigma_{g}\\
=\int_{\partial M}\Lambda_{q}^{\varepsilon}\left((W_{\delta,q}+\delta^{2}V_{\delta,q})^{+}\right)^{\frac{2(n-1)}{n-2}+\varepsilon}-\left((W_{\delta,q}+\delta^{2}V_{\delta,q})^{+}\right)^{\frac{2(n-1)}{n-2}}d\sigma_{\tilde{g}}\\
=(1+o(1))\int_{|z|<\frac{1}{\delta}}\frac{\Lambda_{q}^{\varepsilon}(\delta y)}{\delta^{\varepsilon\frac{n-2}{2}}}\left((U+\delta^{2}v_{q})^{\varepsilon}-1\right)(U+\delta^{2}v_{q})^{\frac{2(n-1)}{n-2}}dz\\
=\int_{\mathbb{R}^{n-1}}\left(-\frac{n-2}{8}\varepsilon\ln\varepsilon-\frac{n-2}{2}\varepsilon\ln\lambda+\varepsilon\ln(U)+o(\varepsilon)\right)U^{\frac{2(n-1)}{n-2}}dz\\
=\frac{n-2}{6}\varepsilon|\ln\varepsilon|\int_{\mathbb{R}^{n-1}}U^{\frac{2(n-1)}{n-2}}dz+\varepsilon\int_{\mathbb{R}^{n-1}}U^{\frac{2(n-1)}{n-2}}\ln(U)dz\\
-\frac{n-2}{2}\varepsilon\ln\lambda\int_{\mathbb{R}^{n-1}}U^{\frac{2(n-1)}{n-2}}dz+o(\varepsilon).
\end{multline*}
Finally, with the same technique, 
\begin{multline*}
\left[\varepsilon\frac{(n-2)^{3}}{2(n-1)}+o(\varepsilon)\right]\int_{\partial M}\left((\tilde{W}_{\delta,q}+\delta^{2}\tilde{V}_{\delta,q})^{+}\right)^{\frac{2(n-1)}{n-2}+\varepsilon}d\sigma\\
=\left[\varepsilon\frac{(n-2)^{3}}{2(n-1)}+o(\varepsilon)\right]\int_{|z|<\frac{1}{\delta}}\frac{\Lambda_{q}(\delta y)}{\delta^{\varepsilon\frac{n-2}{2}}}(U+\delta v_{q})^{\frac{2(n-1)}{n-2}}(U+\delta v_{q})^{\varepsilon}dz+o(\delta^{3})\\
=\varepsilon\frac{(n-2)^{3}}{2(n-1)}\int_{\mathbb{R}^{n-1}}U^{\frac{2(n-1)}{n-2}}+o(\varepsilon).
\end{multline*}
\end{proof}

\section{Proof of Theorem \ref{almaraz}.}

By the following result we prove that once we have a critical point
of the reduced functional $I_{\varepsilon}(\lambda,q)$, we solve
Problem (\ref{eq:Pmain}). The proof of this result is very similar
to the proof of \cite[Claim (i) of Prop. 5]{GMP16}, and we will omit
it for the sake of brevity.
\begin{lem}
\label{lem:punticritici}If $(\bar{\lambda},\bar{q})\in(0,+\infty)\times\partial M$
is a critical point for the reduced functional $I_{\varepsilon}(\lambda,q)$,
then the function $\tilde{W}_{\bar{\lambda}\varepsilon^{\frac{1}{4}},\bar{q}}+\bar{\lambda}^{2}\varepsilon^{\frac{1}{2}}\tilde{V}_{\lambda\varepsilon^{\frac{1}{4}},\bar{q}}+\tilde{\phi}_{\lambda\varepsilon^{\frac{1}{4}},q}$
is a solution of (\ref{eq:Pmain}). Here $\tilde{\phi}_{\lambda\varepsilon^{\frac{1}{4}},q}$
is defined in Proposition \ref{prop:phi}.
\end{lem}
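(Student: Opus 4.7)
The plan is the standard Lyapunov--Schmidt closing argument: show that the reduced critical point condition forces the Lagrange multipliers arising in (\ref{eq:P-K}) to vanish.

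First I would set $\delta = \bar\lambda\varepsilon^{1/4}$ and
\[
u_\varepsilon = \tilde W_{\delta,\bar q} + \bar\lambda^2\varepsilon^{1/2}\tilde V_{\delta,\bar q} + \tilde\phi_{\delta,\bar q}.
\]
Since by Proposition~\ref{prop:phi} the function $\tilde\phi_{\delta,\bar q}$ solves the orthogonal equation (\ref{eq:P-Kort}), the residual
\[
u_\varepsilon - i_g^{*}\!\left(f_\varepsilon(u_\varepsilon)\right)
\]
lies in the finite-dimensional space $\tilde K_{\delta,\bar q}$. Hence there exist coefficients $c_b = c_b(\bar\lambda,\bar q,\varepsilon)$ such that
\[
u_\varepsilon - i_g^{*}(f_\varepsilon(u_\varepsilon)) = \sum_{b=1}^{n} c_b\,\Lambda_{\bar q}Z^{b}_{\delta,\bar q},
\]
equivalently, identifying $J_{\varepsilon,g}'(u_\varepsilon)$ with its Riesz representative via $\langle\langle\cdot,\cdot\rangle\rangle_g$,
\[
J_{\varepsilon,g}'(u_\varepsilon)[\psi] = \sum_{b=1}^{n} c_b\,\langle\langle \Lambda_{\bar q}Z^{b}_{\delta,\bar q},\psi\rangle\rangle_g \quad\text{for all } \psi\in\mathcal{H}_g.
\]
Thus $u_\varepsilon$ solves (\ref{eq:Pmain}) if and only if $c_1=\dots=c_n=0$.

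Next I would exploit the fact that $(\bar\lambda,\bar q)$ is a critical point of $I_\varepsilon$. Writing $I_\varepsilon(\lambda,q) = J_{\varepsilon,g}(u_\varepsilon(\lambda,q))$ and applying the chain rule, the conditions $\partial_\lambda I_\varepsilon = 0$ and $\partial_{q_k} I_\varepsilon = 0$ ($k=1,\dots,n-1$ in local coordinates on $\partial M$) give
\[
\sum_{b=1}^{n} c_b\,\bigl\langle\bigl\langle \Lambda_{\bar q}Z^{b}_{\delta,\bar q},\,\partial_\lambda u_\varepsilon\bigr\rangle\bigr\rangle_g = 0,\qquad
\sum_{b=1}^{n} c_b\,\bigl\langle\bigl\langle \Lambda_{\bar q}Z^{b}_{\delta,\bar q},\,\partial_{q_k} u_\varepsilon\bigr\rangle\bigr\rangle_g = 0.
\]
This is a linear homogeneous system in $(c_1,\dots,c_n)$, whose matrix I denote by $A_\varepsilon$.

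The remaining point is that $A_\varepsilon$ is invertible for small $\varepsilon$. Differentiating the leading piece $\tilde W_{\delta,q}$ with respect to the dilation parameter $\lambda$ produces, after rescaling, precisely the dilation generator $j_n$, while differentiating with respect to the boundary coordinates $q_k$ produces the translation generators $j_k$; the correction $\bar\lambda^2\varepsilon^{1/2}\tilde V$ and the remainder $\tilde\phi$ contribute lower-order terms controlled by Proposition~\ref{prop:phi} and Remark~\ref{rem:vq}. A rescaling argument centred at $\bar q$ (switching to Fermi coordinates and using the conformal identity $\langle\langle\Lambda_q\cdot,\Lambda_q\cdot\rangle\rangle_g = \langle\langle\cdot,\cdot\rangle\rangle_{\tilde g}$) reduces the matrix, up to $o(1)$ as $\varepsilon\to 0^+$, to a nonzero diagonal matrix whose entries are the flat $H^{1}(\mathbb{R}_+^n)$-norms $\|j_b\|^{2}$ of the independent kernel functions, weighted by an appropriate power of $\delta$. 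The non-degeneracy of the kernel (\ref{eq:sol-linearizzato}) then makes $A_\varepsilon$ invertible for $\varepsilon$ small, forcing $c_1=\cdots=c_n=0$.

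The main obstacle is exactly this last point: writing out the cross-derivatives $\partial_\lambda u_\varepsilon$ and $\partial_{q_k} u_\varepsilon$ carefully enough to extract the leading order of $A_\varepsilon$ and to absorb the $\varepsilon^{1/2}$-correction from $\tilde V_{\delta,q}$ and the $\varepsilon^{3/4}|\log\varepsilon|$-correction from $\tilde\phi_{\delta,q}$ into remainder terms. Since, as the authors remark, this is entirely analogous to the argument in \cite[Claim (i) of Prop.~5]{GMP16}, once the non-degeneracy of $A_\varepsilon$ is established the conclusion follows, showing that $u_\varepsilon$ satisfies (\ref{eq:P*}) and is therefore a solution of (\ref{eq:Pmain}).
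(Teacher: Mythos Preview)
Your proposal is correct and follows exactly the standard Lyapunov--Schmidt closing argument that the paper has in mind: the authors omit the proof entirely, pointing to \cite[Claim~(i) of Prop.~5]{GMP16}, and your outline (residual in $\tilde K_{\delta,\bar q}$, chain rule on $I_\varepsilon$, asymptotic diagonalisation of the resulting matrix via the non-degeneracy of the $j_b$) is precisely that argument.
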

\begin{lem}
\label{lem:phi<0}Assume $n\ge8$ and that the Weyl tensor $W_{g}$
is not vanishing on $\partial M$. Then the function $\varphi(q)$
defined in Proposition \ref{lem:expJeps} is strictly negative on
$\partial M$. 
\end{lem}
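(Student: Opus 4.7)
The plan is to verify that each of the three summands in the explicit formula for $\varphi(q)$ given in Proposition \ref{lem:expJeps} is non-positive under the hypothesis $n\ge 8$, with the middle term being strictly negative at every $q\in\partial M$ under the Weyl-tensor hypothesis. All the required ingredients are already present in the excerpt, so the argument reduces to careful sign bookkeeping once the first term is controlled.

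For the first summand, $\tfrac{1}{2}\int_{\mathbb{R}_+^n} v_q\Delta v_q \, dtdz \le 0$ is exactly the content of inequality (\ref{new}) in Remark \ref{rem:vq}. Conceptually this is a coercivity statement for the linearization (\ref{eq:linearizzato}): integrating by parts and using the Robin condition $\partial_{y_n}v_q = -nU^{2/(n-2)} v_q$ rewrites the integral as minus the energy $\int_{\mathbb{R}_+^n} |\nabla v_q|^2 - n\int_{\partial \mathbb{R}_+^n} U^{2/(n-2)} v_q^2$, which is non-negative because $v_q$ is $L^2$-orthogonal to the kernel $\mathrm{span}\{j_1,\ldots,j_n\}$ of the linearized operator. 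For the second summand, the coefficient $-\tfrac{n-2}{96(n-1)}$ is negative, $|\bar W(q)|^2 \ge 0$, and $\int_{\mathbb{R}_+^n} |z|^2 U^2\,dtdz > 0$; so this summand is $\le 0$, and strictly negative whenever $|\bar W(q)|^2 > 0$. For the third summand, the factor $(n-2)(n-8) \ge 0$ for $n\ge 8$, $R_{ninj}^2(q)$ is a sum of squares, and the integral is clearly positive; together with the overall minus sign this yields a non-positive contribution.

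Adding these three non-positive terms and invoking the standard fact that on an umbilic boundary the non-vanishing of the ambient Weyl tensor at a point $q\in\partial M$ implies $|\bar W(q)|^2 > 0$ (which is the geometric content behind (\ref{eq:Rii})), one concludes $\varphi(q) < 0$ at every $q\in\partial M$. The only genuinely analytic input is the sign of the first term, recorded in (\ref{new}) and proved in \cite{GMP19}; the dimensional threshold $n\ge 8$ appears precisely as the sign condition on the third summand, which is why this lemma recovers the main result in the improved dimensional range announced in the introduction.
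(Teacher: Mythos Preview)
Your argument has a genuine gap at the step where you assert that, on an umbilic boundary, $W_g(q)\neq 0$ forces $|\bar W(q)|^2>0$. This is not true in general: the relevant fact (see \cite[p.~1618]{M1}, quoted in the paper's proof) is that for umbilic $\partial M$ one has $W_g(q)=0$ \emph{if and only if} both $\bar W(q)=0$ \emph{and} $R_{ninj}(q)=0$. Hence the hypothesis only guarantees that at each $q$ at least one of $|\bar W(q)|^2$ and $R_{ninj}^2(q)$ is strictly positive, not that the former is. (Your citation of (\ref{eq:Rii}) for this implication is also off: that formula relates $\partial_{ii}^2\bar R_{\tilde g_q}(q)$ to $|\bar W(q)|^2$ and says nothing about $W_g$ versus $\bar W$.)

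For $n>8$ your approach is easily repaired once you use the correct dichotomy: if $|\bar W(q)|^2>0$ the second summand is strictly negative, while if $|\bar W(q)|^2=0$ then necessarily $R_{ninj}^2(q)>0$ and the third summand is strictly negative because $(n-8)>0$. But for $n=8$ the third summand vanishes identically, so at a point with $\bar W(q)=0$ and $R_{ninj}(q)\neq 0$ your three inequalities give only $\varphi(q)\le 0$. The paper closes this gap by invoking the sharpened estimate $\int_{\mathbb{R}_+^8} v_q\,\Delta v_q\,dy\le -C_3\,R_{8i8j}^2(q)$ from \cite[Lemma 19]{GMsub}, which upgrades (\ref{new}) precisely when $R_{ninj}(q)\neq 0$; this extra analytic input is essential and is missing from your argument.
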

\begin{proof}
By Proposition \ref{lem:expJeps} we have that 
\[
\varphi(q)=\frac{1}{2}\int_{\mathbb{R}_{+}^{n}}v_{q}\Delta v_{q}dtdz-C_{1}|\bar{W}(q)|^{2}-(n-8)C_{2}R_{ninj}^{2}(q),
\]
where $C_{1},C_{2}$ are positive constants. We recall (see \cite[page 1618]{M1})
that when the boundary is umbilic $W(q)=0$ if and only if $\bar{W}(q)$
and $R_{ninj}(q)$ are both zero, so, by our assumption, we have that
at least one among $\bar{|W}(q)|$ and $R_{nlnj}^{2}(q)$ which is
strictly positive. This, combined with (\ref{new}), implies that,
$\varphi(q)$ is strictly negative for $n>8$.

When $n=8$ the same strategy leads only to the weak inequality $\varphi(q)\le0$.
To overcome this difficulty, a delicate analysis of the term $\int_{\mathbb{R}_{+}^{n}}v_{q}\Delta v_{q}dtdz$
is needed. An improvement of the estimate (\ref{new}) is performed
in \cite{GMsub}, where the authors give a more precise description
of the function $v_{q}$ as a sum of an harmonic function with explicit
rational functions. This description, for $n=8$, leads to the inequality,
proved in \cite[Lemma 19]{GMsub},
\[
\int_{\mathbb{R}_{+}^{8}}v_{q}\Delta v_{q}dy\le-C_{3}R_{8i8j}^{2}(q),
\]
where $C_{3}>0$. Thus for $n=8$ we have 
\[
\varphi(q)\le-C_{1}|\bar{W}(q)|^{2}-C_{3}R_{8i8j}^{2}(q)<0,
\]
which completes the proof.
\end{proof}
\begin{proof}[Proof of Theorem \ref{almaraz}]
By Lemma \ref{lem:phi<0} we have that the function $\varphi(q)$
defined in Proposition \ref{lem:expJeps} is strictly negative on
$\partial M$. We recall as well, that the number $C$ defined in
the same proposition is positive. Then, defined 
\begin{align*}
I & :[a,b]\times\partial M\rightarrow\mathbb{R}\\
I(\lambda,q) & =\lambda^{2}\varphi(q)+C\log\lambda
\end{align*}
 we have that for any $M<0$ there exist $a,b$ such that 
\[
I(\lambda,q)<M\text{ for any }q\in\partial M,\ \lambda\not\in[a,b]
\]
and
\[
\frac{\partial I}{\partial\lambda}(a,q)\neq0,\ \ \frac{\partial I}{\partial\lambda}(a,q)\neq0\ \forall q\in\partial M.
\]
Then the function $I$ admits a absolute maximum on $[a,b]\times\partial M$.
This maximum is also $C^{0}$-stable. in other words, if $(\lambda_{0},q_{0})$
is the maximum point for $I$, for any function $f\in C^{1}([a,b]\times\partial M)$
with $\|f\|_{C^{0}}$ sufficiently small, then the function $I+f$
on $[a,b]\times\partial M$ admits a maximum point $(\bar{\lambda},\bar{q})$
close to $(\lambda_{0},q_{0})$. 

Then, taken an $\varepsilon$ sufficiently small, in light of Proposition
\ref{lem:JWpiuPhi} and Proposition \ref{lem:expJeps}, there exists
a pair $(\lambda_{\varepsilon},q_{\varepsilon})$ maximum point for
$I_{\varepsilon}(\lambda,q)$. Thus, by Lemma \ref{lem:punticritici},
$v_{\varepsilon}:=\tilde{W}_{\bar{\lambda}\varepsilon^{\frac{1}{4}},\bar{q}}+\bar{\lambda}^{2}\varepsilon^{\frac{1}{2}}\tilde{V}_{\lambda\varepsilon^{\frac{1}{4}},\bar{q}}+\tilde{\phi}_{\lambda\varepsilon^{\frac{1}{4}},q}\in\mathcal{H}_{g}$
is a solution of (\ref{eq:Pmain}). By construction $v_{\varepsilon}$
blows up at $q_{\varepsilon}\rightarrow q_{0}$ when $\varepsilon\rightarrow0$.
\end{proof}

\section{Appendix}
\begin{proof}[Proof of Lemma \ref{lem:JWpiuPhi}]
 We have, for some $\theta\in(0,1)$ 
\begin{multline*}
\tilde{J}_{\varepsilon,\tilde{g}_{q}}(W_{\delta,q}+\delta^{2}V_{\delta,q}+\phi_{\delta,q})-\tilde{J}_{\varepsilon,\tilde{g}_{q}}(W_{\delta,q}+\delta^{2}V_{\delta,q})=\tilde{J}_{\varepsilon,\tilde{g}_{q}}'(W_{\delta,q}+\delta^{2}V_{\delta,q})[\phi_{\delta,q}]\\
+\frac{1}{2}\tilde{J}_{\varepsilon,\tilde{g}_{q}}''(W_{\delta,q}+\delta^{2}V_{\delta,q}+\theta\phi_{\delta,q})[\phi_{\delta,q},\phi_{\delta,q}]\\
=\int_{M}\left(\nabla_{\tilde{g}_{q}}W_{\delta,q}+\delta^{2}\nabla_{\tilde{g}_{q}}V_{\delta,q}\right)\nabla_{\tilde{g}_{q}}\phi_{\delta,q}+\tilde{a}(x)\left(W_{\delta,q}+\delta^{2}V_{\delta,q}\right)\phi_{\delta,q}d\mu_{\tilde{g}_{q}}\\
-(n-2)\int_{\partial M}\Lambda_{q}^{\varepsilon}\left(\left(W_{\delta,q}+\delta^{2}V_{\delta,q}\right)^{+}\right)^{\frac{n}{n-2}+\varepsilon}\phi_{\delta,q}d\sigma_{\tilde{g}_{q}}\\
+\frac{n-2}{2}\int_{\partial M}h_{\tilde{g}_{q}}\left(W_{\delta,q}+\delta^{2}V_{\delta,q}\right)\phi_{\delta,q}d\sigma_{\tilde{g}_{q}}\\
+\frac{1}{2}\int_{M}|\nabla_{\tilde{g}_{q}}\phi_{\delta,q}|^{2}+\tilde{a}(x)\phi_{\delta,q}^{2}d\mu_{\tilde{g}_{q}}+\frac{n-2}{4}\int_{\partial M}h_{\tilde{g}_{q}}\phi_{\delta,q}^{2}d\sigma_{\tilde{g}_{q}}\\
-\frac{n+\varepsilon(n-2)}{2}\int_{\partial M}\Lambda_{q}^{\varepsilon}\left(\left(W_{\delta,q}+\delta^{2}V_{\delta,q}+\theta\phi_{\delta,q}\right)^{+}\right)^{\frac{2}{n-2}+\varepsilon}\phi_{\delta,q}^{2}d\sigma_{\tilde{g}_{q}}.
\end{multline*}
Immediately we have, by definition of $\|\cdot\|_{\tilde{g}}$,
\[
\int_{M}|\nabla_{\tilde{g}_{q}}\phi_{\delta,q}|^{2}+\tilde{a}\phi_{\delta,q}^{2}d\mu_{\tilde{g}_{q}}+\int_{\partial M}\frac{n-2}{4}h_{\tilde{g}_{q}}\phi_{\delta,q}^{2}d\sigma=C\|\phi_{\delta,q}\|_{\tilde{g}_{q}}^{2}=o(\varepsilon).
\]
By Holder inequality one can easily obtain
\[
\int_{M}\tilde{a}W_{\delta,q}\phi_{\delta,q}d\mu_{\tilde{g}_{q}}\le C|W_{\delta,q}|_{L_{\tilde{g}}^{\frac{2n}{n+2}}}|\phi_{\delta,q}|_{L_{\tilde{g}}^{\frac{2n}{n-2}}}\le C\delta^{2}\|\phi_{\delta,q}\|_{\tilde{g}}=o(\varepsilon)
\]
and
\[
\delta^{2}\int_{M}\tilde{a}V_{\delta,q}\phi_{\delta,q}d\mu_{\tilde{g}_{q}}\le C\delta^{2}|V_{\delta,q}|_{L_{\tilde{g}}^{2}}|\phi_{\delta,q}|_{L_{\tilde{g}}^{2}}\le C\delta^{2}\|\phi_{\delta,q}\|_{\tilde{g}}=o(\varepsilon).
\]
In addition, notice that $\left(\left(W_{\delta,q}+\delta^{2}V_{\delta,q}+\theta\phi_{\delta,q}\right)^{+}\right)^{\frac{2}{n-2}+\varepsilon}$
belongs to $L_{\tilde{g}}^{\frac{2(n-1)+n(n-2)\varepsilon}{2+(n-2)\varepsilon}}$and
that $2\left(\frac{2(n-1)+n(n-2)\varepsilon}{2+(n-2)\varepsilon}\right)'=\frac{4(n-1)+2n(n-2)\varepsilon}{2(n-2)+(n-1)(n-2)\varepsilon}<s_{\varepsilon}$,
so, again by Holder inequality we have
\begin{multline*}
\int_{\partial M}\Lambda_{q}^{\varepsilon}\left(\left(W_{\delta,q}+\delta^{2}V_{\delta,q}+\theta\phi_{\delta,q}\right)^{+}\right)^{\frac{2}{n-2}+\varepsilon}\phi_{\delta,q}^{2}d\sigma_{\tilde{g}_{q}}\\
\le C\left(\left|W_{\delta,q}+\delta^{2}V_{\delta,q}+\theta\phi_{\delta,q}\right|_{L_{\tilde{g}}^{s_{\varepsilon}}}^{\frac{2}{n-2}}\right)\|\phi_{\delta,q}\|_{\tilde{g}}^{2}=o(\varepsilon).
\end{multline*}
and by(\ref{eq:hg-resto}) 
\begin{multline*}
\int_{\partial M}h_{\tilde{g}_{q}}(W_{\delta,q}+\delta^{2}V_{\delta,q})\phi_{\delta,q}d\sigma_{\tilde{g}_{q}}\\
\le|h_{\tilde{g}}(W_{\delta,q}+\delta^{2}V_{\delta,q})|_{L_{\tilde{g}}^{\frac{2(n-1)}{n}}(\partial M)}\|\phi_{\delta,q}\|_{\tilde{g}}=O(\delta^{3})\|\phi_{\delta,q}\|_{\tilde{g}}=o(\varepsilon).
\end{multline*}
 By integration by parts we have 
\begin{multline}
\int_{M}\left(\nabla_{\tilde{g}_{q}}W_{\delta,q}+\delta^{2}\nabla_{\tilde{g}_{q}}V_{\delta,q}\right)\nabla_{\tilde{g}_{q}}\phi_{\delta,q}d\mu_{\tilde{g}_{q}}=-\int_{M}\Delta_{\tilde{g}_{q}}\left(W_{\delta,q}+\delta^{2}V_{\delta,q}\right)\phi_{\delta,q}d\mu_{\tilde{g}_{q}}\\
+\int_{\partial M}\left(\frac{\partial}{\partial\nu}W_{\delta,q}+\delta^{2}\frac{\partial}{\partial\nu}V_{\delta,q}\right)\phi_{\delta,q}d\sigma_{\tilde{g}_{q}}.\label{eq:parts}
\end{multline}
and, as in (\ref{eq:Delta-resto}), we get 
\begin{multline*}
\int_{M}\Delta_{\tilde{g}_{q}}\left(W_{\delta,q}+\delta^{2}V_{\delta,q}\right)\phi_{\delta,q}d\mu_{\tilde{g}_{q}}\\
\le|\Delta_{\tilde{g}_{q}}(W_{\delta,q}+\delta^{2}V_{\delta,q})|_{L_{\tilde{g}}^{\frac{2n}{n+2}}}\|\phi_{\delta,q}\|_{\tilde{g}}=O(\delta^{2})\|\phi_{\delta,q}\|_{\tilde{g}}=o(\varepsilon),
\end{multline*}
and for the boundary term in (\ref{eq:parts}), we have, in light
of (\ref{eq:bordo1}), (\ref{eq:bordo2}), (\ref{eq:bordo3}) and
(\ref{eq:bordo4}), that 
\begin{multline*}
\int_{\partial M}\left[\left(\frac{\partial}{\partial\nu}W_{\delta,q}+\delta^{2}\frac{\partial}{\partial\nu}V_{\delta,q}\right)-(n-2)\left(\left(W_{\delta,q}+\delta^{2}V_{\delta,q}\right)^{+}\right)^{\frac{n}{n-2}}\right]\phi_{\delta,q}d\sigma_{\tilde{g}_{q}}\\
=\left|(n-2)\left(\left(W_{\delta,q}+\delta^{2}V_{\delta,q}\right)^{+}\right)^{\frac{n}{n-2}}-\frac{\partial}{\partial\nu}W_{\delta,q}\right|_{L_{\tilde{g}}^{\frac{2(n-1)}{n}}}|\phi_{\delta,q}|_{L_{\tilde{g}}^{\frac{2(n-1)}{n-2}}}\\
=O(\delta^{3})\|\phi_{\delta,q}\|_{\tilde{g}}=o(\varepsilon).
\end{multline*}
At this point it remains to estimate 
\[
\int_{\partial M}\left[\Lambda_{q}^{\varepsilon}\left(\left(W_{\delta,q}+\delta^{2}V_{\delta,q}\right)^{+}\right)^{\frac{n}{n-2}+\varepsilon}-\left(\left(W_{\delta,q}+\delta^{2}V_{\delta,q}\right)^{+}\right)^{\frac{n}{n-2}}\right]\phi_{\delta,q}d\sigma_{\tilde{g}_{q}}
\]
and we proceed as in (\ref{eq:feps-f0}) to get
\begin{multline*}
\int_{\partial M}\left[\Lambda_{q}^{\varepsilon}\left(\left(W_{\delta,q}+\delta^{2}V_{\delta,q}\right)^{+}\right)^{\frac{n}{n-2}+\varepsilon}-\left(\left(W_{\delta,q}+\delta^{2}V_{\delta,q}\right)^{+}\right)^{\frac{n}{n-2}}\right]\phi_{\delta,q}d\sigma_{\tilde{g}_{q}}\\
\le\left|\Lambda_{q}^{\varepsilon}\left(\left(W_{\delta,q}+\delta^{2}V_{\delta,q}\right)^{+}\right)^{\frac{n}{n-2}+\varepsilon}-\left(\left(W_{\delta,q}+\delta^{2}V_{\delta,q}\right)^{+}\right)^{\frac{n}{n-2}}\right|_{L_{\tilde{g}}^{\frac{2(n-1)}{n}}}\|\phi_{\delta,q}\|_{\tilde{g}}=o(\varepsilon),
\end{multline*}
ending the proof.
\end{proof}

\end{document}